\theoremstyle{plain} 
\newcommand{\thistheoremname}{}  
\newtheorem*{genericthm*}{\thistheoremname}
\newenvironment{namedthm}[1]
  {\renewcommand{\thistheoremname}{#1}%
   \begin{genericthm*}}
  {\end{genericthm*}}
\title{Upper tails and independence polynomials in random graphs}
\author[Bhattacharya]{Bhaswar B. Bhattacharya}
\address{B.\ B.\ Bhattacharya\hfill\break
	Department of Statistics\\ University of Pennsylvania\\ Philadelphia, PA 19104, USA.}
\email{bhaswar@wharton.upenn.edu}
\author[Ganguly]{Shirshendu Ganguly}
\address{S. Ganguly \hfill\break
	Department of Statistics\\ UC Berkeley \\ 
	Berkeley, California, CA 94720, USA.}
\email{sganguly@berkeley.edu}
\author[Lubetzky]{Eyal Lubetzky}
\address{E.\ Lubetzky\hfill\break
	Courant Institute\\
	New York University\\
	251 Mercer Street\\ New York, NY 10012, USA.}
\email{eyal@courant.nyu.edu}
\author[Zhao]{Yufei Zhao}
\address{Y.\ Zhao\hfill\break
	Department of Mathematics\\ 
	MIT\\ Cambridge, MA 02139, USA.}
\email{yufeiz@mit.edu}
\begin{document}

\begin{abstract}
	The upper tail problem in the Erd\H{o}s--R\'enyi random graph $G\sim\mathcal{G}_{n,p}$ 	
	asks to estimate the probability that the number of copies of a graph $H$ in $G$ exceeds its expectation by a  factor $1+\delta$.
	Chatterjee and Dembo showed that in the sparse regime of $p\to 0$ as $n\to\infty$ with $p \geq n^{-\alpha}$ for an explicit $\alpha=\alpha_H>0$, this problem reduces to a natural variational problem on weighted graphs, which was thereafter  asymptotically solved by two of the authors in the case where $H$ is a clique.
	
	Here we extend the latter work to any fixed graph $H$ and determine a function $c_H(\delta)$ such that, for $p$ as above and any fixed $\delta>0$, the upper tail probability is $\exp[-(c_H(\delta)+o(1))n^2 p^\Delta \log(1/p)]$, where $\Delta$  is the maximum degree of $H$.
	As it turns out, the leading order constant in the large deviation rate function, $c_H(\delta)$, is governed by the independence polynomial of $H$, defined as $P_H(x)=\sum i_H(k) x^k $ where $i_H(k)$ is the number of independent sets of size $k$ in $H$. For instance, if $H$ is a regular graph on $m$ vertices, then $c_H(\delta)$ is the minimum between  $\frac12 \delta^{2/m}$ and the unique positive solution of $P_H(x) = 1+\delta$.
\end{abstract}

\maketitle

\section{Introduction}

\subsection{The upper tail problem in the random graph}
Let $\cG_{n, p}$ be the Erd\H os--R\'enyi random graph on $n$ vertices with edge probability $p$, and let $X_H$ be the number of copies of a fixed graph $H$ in it.
The upper tail problem for $X_H$ asks to estimate the large deviation rate function given by
\[ R_H(n,p,\delta) := -\log \P\left(X_H \geq (1+\delta)\E[X_H]\right)\quad\mbox{ for fixed $\delta > 0$}\,,\]
a classical and extensively studied problem (cf.~\cite{JR02,Vu01,KV04,JOR04,JR04,Cha12,DK12b,DK12} and~\cite{Bol,JLR} and the references therein) which already for the seemingly basic case of triangles ($H=K_3$) is highly nontrivial and still not fully understood. It followed from works of
Vu~\cite{Vu01} and Kim and Vu~\cite{KV04} that\footnote{We write $f \lesssim g$ to denote $f = O(g)$; $f \asymp g$ means $f = \Theta(g)$;  $f \sim g$ means $f = (1+o(1))g$ and $f \ll g$ means $f = o(g)$.}
\[   n^2 p^2 \lesssim R_{K_3}(n,p,\delta)  \lesssim n^2 p^2\log(1/p)\]
(the lower bound used the so-called ``polynomial concentration'' machinery, whereas the upper bound follows, e.g., from the fact that an arbitrary set of $s \sim  \delta^{1/3} np$ vertices can form a clique in $\cG_{n,p}$ with probability $p^{\binom{s}2}=p^{O(n^2 p^2)}$, thus contributing $\binom{s}3 \sim \delta \binom{n}{3} p^3 = \delta \E[X_{K_3}]$ extra triangles). The correct order of the rate function was settled fairly recently by Chatterjee~\cite{Cha12}, and independently  by DeMarco and Kahn~\cite{DK12}, proving that $R_{K_3}(n,p,\delta) \asymp n^2 p^2 \log(1/p)$ for  $p \ge \frac{\log n} n$. This was later extended in~\cite{DK12b}  to cliques ($H=K_k$  for $k\geq 3$), establishing that
 for $p \ge n^{-2/(k-1)+\varepsilon}$ with  $\varepsilon>0$ fixed\footnote{More precisely,
DeMarco and Kahn~\cite{DK12b} showed that
$R_{K_k}\asymp \min\{n^2 p^{k-1} \log (1/p), n^kp^{\binom{k}{2}}\}$ for $p \ge n^{-2/(k-1)}$.},
\[
R_{K_k}(n,p,\delta) \asymp n^2 p^{k-1} \log (1/p)\,.
\]
The methods of~\cite{Cha12,DK12,DK12b} did not allow recovering the exact asymptotics of this rate function, and
in particular, one could ask, e.g., whether $R_{K_3}(n,p,\delta) \sim c(\delta) n^2 p^2 \log(1/p)$ with $c(\delta) = \frac12\delta^{2/3}$, as the aforementioned clique upper bound for it may suggest (recall its probability is $p^{\binom{s}2}$ for $s\sim \delta^{1/3} n p$).

Much progress has since been made in that front, propelled by the seminal work of Chatterjee and Varadhan~\cite{CV11} that introduced a large deviation framework for $\cG_{n,p}$ in the \emph{dense regime} ($0<p<1$ fixed) via the theory of graph limits  (cf.~\cite{CD10,CV11,LZ-dense} for more on the many questions still open in that regime). See the survey by Chatterjee~\cite{Cha16} on recent developments on this topic. In the sparse regime ($p\to 0$), in the absence of graph limit tools,  the understanding of large deviations for a fixed graph $H$ (be it even a triangle) remained very limited until a recent breakthrough paper of Chatterjee and Dembo~\cite{CD16} that reduced it to a natural variational problem in a certain range of $p$ (see Definition~\ref{def-var-prob} and Theorem~\ref{thm:CD} below). The third and fourth authors solved this variational problem asymptotically for triangles~\cite{LZ-sparse}, thereby yielding the following conclusion: for fixed $\delta > 0$, if $n^{-1/42} \log n \leq p = o(1)$, then
\begin{equation} \label{eq:tail-prob-K3}
R_{K_3}(n,p,\delta) \sim c(\delta) n^2p^2 \log(1/p)\quad\mbox{ where }\quad c(\delta)=\min \bigl\{ \tfrac12 \delta^{2/3}, \tfrac13 \delta\bigr\}\,,
\end{equation}
and we see that the clique construction from above gives the correct leading order constant if $\delta \ge 27/8$.
More generally, for every $k\ge 3$ there is an explicit $\alpha_k > 0$ so that, for fixed $\delta > 0$, if $n^{-\alpha_k} \le p = o(1)$,
\begin{equation} \label{eq:tail-prob-clique}
R_{K_k}(n,p,\delta) \sim c_k(\delta)n^2p^{k-1} \log(1/p) \quad\mbox{ where }\quad c_k(\delta)= \min \bigl\{ \tfrac12 \delta^{2/k}, \tfrac1k \delta\bigr\} \,.
\end{equation}

\begin{figure}\begin{center}
\vspace{-0.4cm}
\begin{tikzpicture}[font=\tiny,plotd/.style={draw=black,dotted}]
    \node (plot1) at (0,0) {
      \includegraphics[width=.45\textwidth]{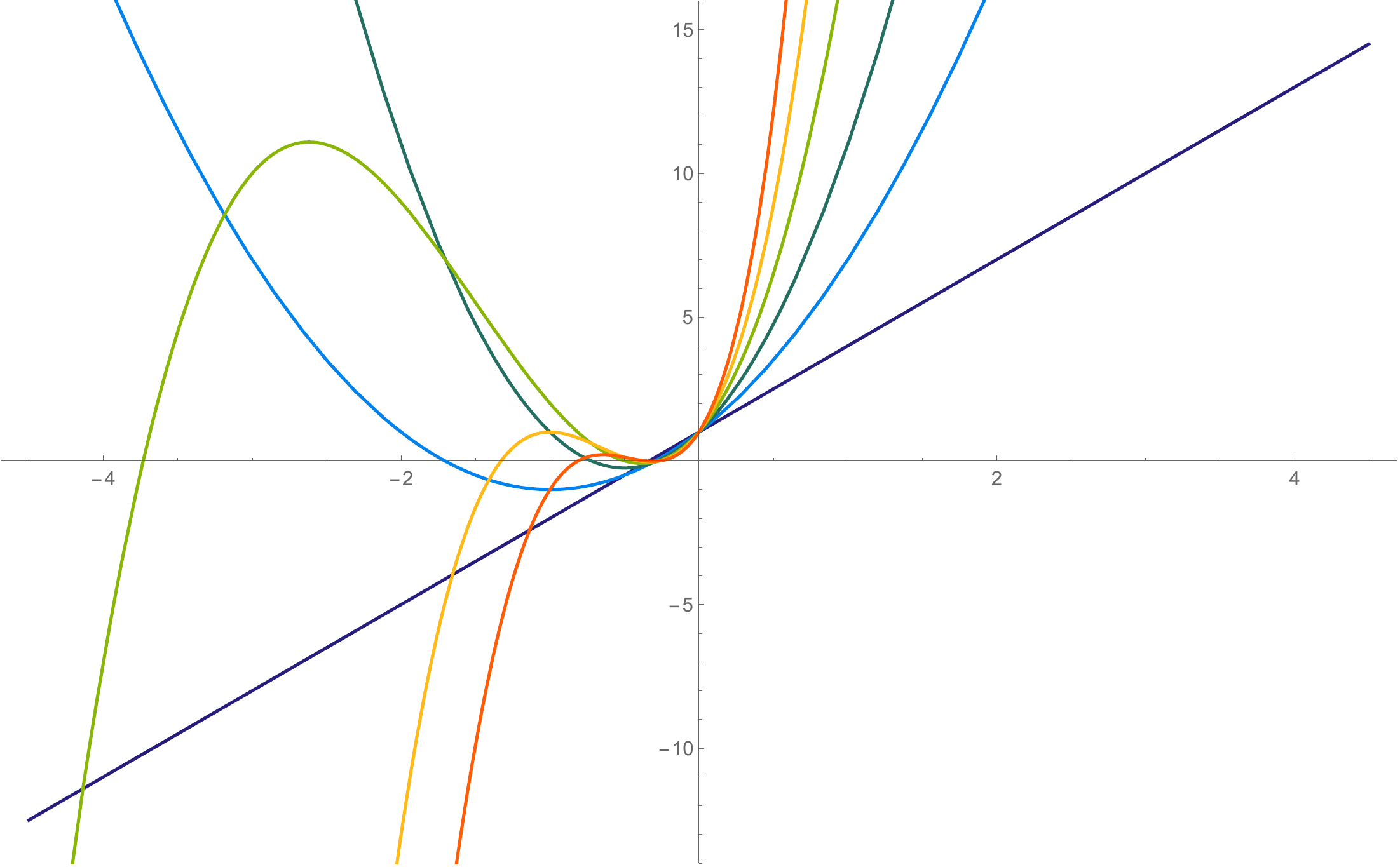}};
    \node (plot2) at (8cm,-0.1cm) {
      \includegraphics[width=.45\textwidth]{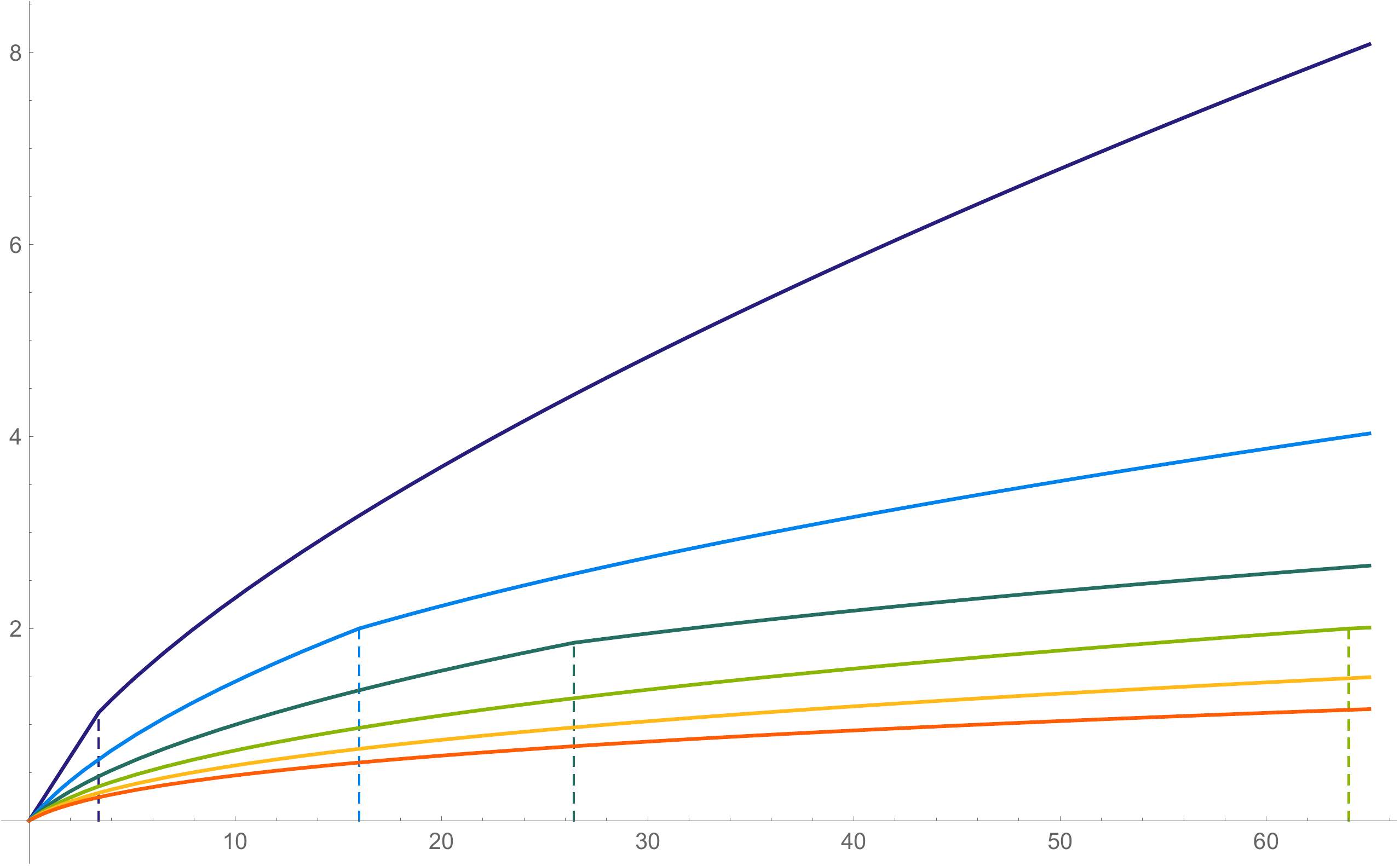}};
     \node (plot3) at (11.25,-4.3) {
        \includegraphics[width=0.1\textwidth]{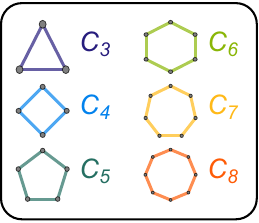}};
    \node[plotd] (plot4) at (7cm,-4.3cm) {
      \includegraphics[width=.3\textwidth]{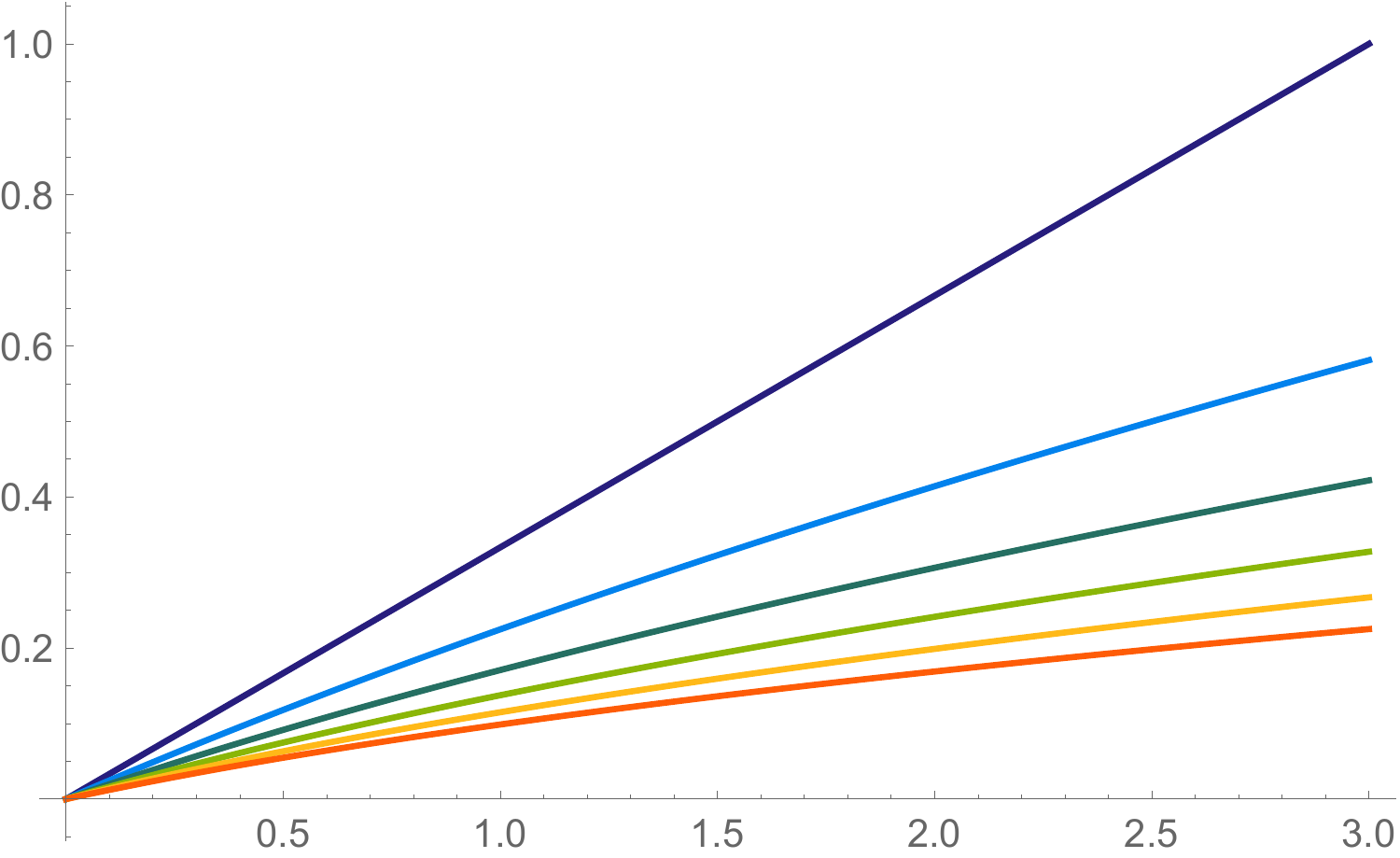}};
    \node[plotd] (plot5) at (-1cm,-4.3cm) {
      \includegraphics[width=.3\textwidth]{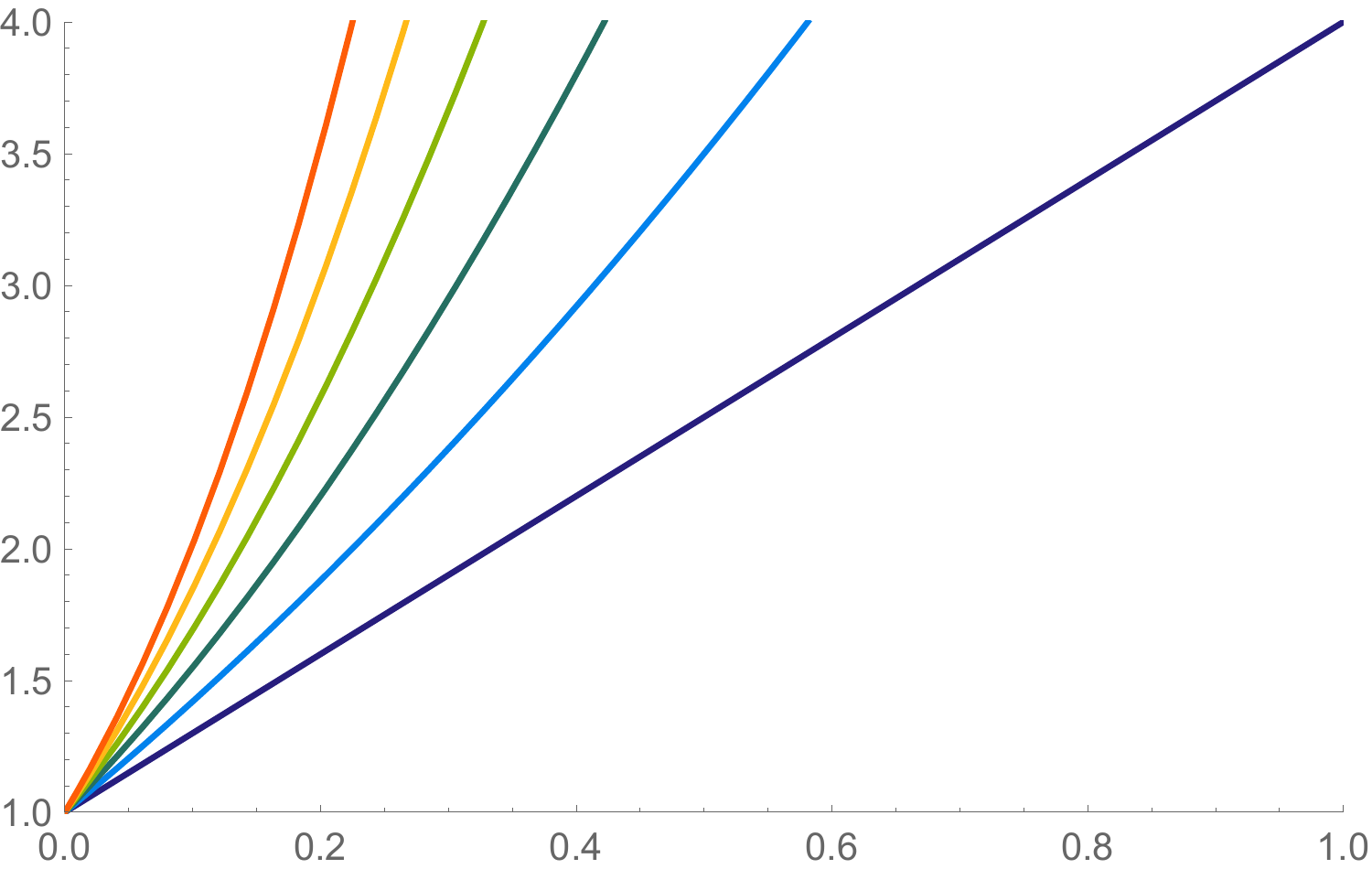}};
    \begin{scope}[shift={(plot1.south west)}]
    \draw[black,thick,dotted] (4.,2.52) -- (4.,3.03) -- (4.85,3.03) -- (4.85,2.52) -- cycle;
    \node at (7.7,2.25) {$x$};
    \node at (3.4,4.8) {$P_{H}(x)$};
    \end{scope}
    \begin{scope}[shift={(plot2.south west)}]
    \draw[black,thick,dotted] (.31,.37) -- (.31,.94) -- (.62,.94) -- (.62,.37) -- cycle;
    \node at (7.7,.2) {$\delta$};
    \node at (.75,4.8) {$c_{H}(\delta)$};
    \end{scope}
\end{tikzpicture}
\end{center}
\vspace{-0.35cm}
\caption{The leading order constant $c_H(\delta)$ for the upper tail rate function for $k$-cycles vs.\ their independence polynomials $P_H(x)$. Zoomed-in regions show $P_H(c_H(\delta))=1+\delta$.}
\vspace{-0.3cm}
\label{fig:ldp_cycles}
\end{figure}

For a \emph{general} fixed graph $H$ with maximum degree $\Delta \geq 2$ (when $\Delta=1$ the problem is nothing but the large deviation in the binomial variable corresponding to the edge-count) the order of the rate function was established up to a multiplicative $\log(1/p)$ factor by Janson, Oleszkiewicz, and Ruci\'nski~\cite{JOR04}. In the range $p \geq n^{-1/\Delta}$, their estimate (which involves a complicated quantity $M_H^*(n,p)$) simplifies into
\[
n^2 p^{\Delta} \lesssim R_H(n,p,\delta) \lesssim n^2 p^\Delta \log(1/p)
\]
(with constants depending on $H$ and on $\delta$). As a byproduct of the analysis of cliques in~\cite{LZ-sparse}, it was shown  \cite[Corollary~4.5]{LZ-sparse} that there is some explicit $\alpha_H > 0$ so that, for fixed $\delta > 0$, if $n^{-\alpha_H} \le p = o(1)$,
\[
R_H(n,p,\delta) \asymp n^2p^{\Delta} \log(1/p)\,,
\]
yet those bounds were not sharp already for the 4-cycle $C_4$.
Here we extend that work and determine the precise asymptotics of $R_H(n,p,\delta)$ for any fixed graph $H$ in the above mentioned range $n^{-\alpha_H} \le p = o(1)$ (as currently needed in the framework of~\cite{CD16}).  Solving the variational problem for a general $H$  requires significant new ideas  atop~\cite{LZ-sparse}, and turns out to involve the \emph{independence polynomial} $P_H(x)$ (see Fig.~\ref{fig:ldp_cycles}).

\begin{defn}[Independence polynomial] \label{def-ind-poly}
The \emph{independence polynomial} of $H$ is defined to be
\[
  P_H(x) := \sum_k i_H(k) x^k\,,
\]
where $i_H(k)$ is the number of $k$-element independent sets in $H$.
\end{defn}
\begin{defn}[Inducing on maximum degrees]\label{def-H*} For a graph $H$ with maximum degree $\Delta$, let $H^*$ be the induced subgraph of $H$ on all vertices whose degree in $H$ is $\Delta$.  (Note that $H^* = H$ if $H$ is regular.)
\end{defn}
Roots of independence polynomials were studied in various contexts (cf.~\cite{BHN03,BN05,CS07} and their references); here, the unique positive $x$ such that $P_{H^*}(x) = 1+\delta$ will, perhaps surprisingly, give the leading order constant (possibly capped at some maximum value if $H$ happens to be regular) of $R_H(n,p,\delta)$.

\subsection{Variational problem}
For graphs $G$ and $H$, denote by $\hom(H,G)$ the number of homomorphisms from $H$ to $G$ (a graph homomorphism is a map $V(H) \to V(G)$ that carries every edge of $H$ to an edge of $G$). The homomorphism density of $H$ in $G$ is defined as
$
t(H, G) := \hom(H, G) \, |V(G)|^{-|V(H)|}
$,
that is, the probability that a uniformly random map $V(H) \to V(G)$ is a homomorphism from $H$ to $G$.

Henceforth, we will work with $t(H, G)$ for $G \sim \cG_{n,p}$ instead of $X_H$ for convenience (the two quantities are nearly proportional, as the only possible discrepancies---non-injective homomorphisms from $H$ to $G$---are a negligible fraction of all homomorphisms when $G$ is sufficiently large and not too sparse).

Chatterjee and Dembo \cite{CD16} proved a non-linear large deviation principle, and in particular derived the exact asymptotics of the rate function for a general graph $H$ in terms of a variational problem.

\begin{defn}[Discrete variational problem]\label{def-var-prob}
Let $\sG_n$ denote the set of weighted undirected graphs on $n$ vertices with edge weights in $[0, 1]$, that is, if $A(G)$ is the adjacency matrix of $G$ then
\[
\sG_n =\left\{G_n:  A(G_n)=(a_{ij})_{1\leq i, j \leq n},\, 0\leq a_{ij}\leq 1,\, a_{ij} = a_{ji},\, a_{ii} = 0 \text{ for all } i, j\right\}.
\]
Let $H$ be a fixed graph with maximum degree $\Delta$. The variational problem for $\delta > 0$ and $0 < p < 1$ is
\begin{equation}
\phi(H, n, p, \delta):= \inf\left\{I_p(G_n) : G_n\in \sG_n \text{ with } t(H, G_n) \geq (1 + \delta)p^{|E(H)|}\right\}
\label{eq:disvar}
\end{equation}
where
\[
t(H, G_n):= n^{-|V(H)|}\sum_{1\leq i_1,\cdots,i_k\leq n}\prod_{(x, y)\in E(H)}a_{i_x i_y}
\]
is the density of (labeled) copies of $H$ in $G_n$, and $I_p(G_n)$ is the entropy relative to $p$, that is,
\[I_p(G) :=\sum_{1\leq i < j\leq n}I_p(a_{ij}) \quad \text{and} \quad I_p(x):=x\log\frac{x}{p}+(1-x)\log \frac{1-x}{1-p}\,.\]
\end{defn}
\begin{thm}[Chatterjee and Dembo~\cite{CD16}] \label{thm:CD}
Let $H$ be a fixed graph. There is some explicit $\alpha_H > 0$ such that for $n^{-\alpha_H}\leq p < 1$ and any fixed $\delta>0$,
\[
 \P\left(t(H, \cG_{n, p})\geq (1+\delta)p^{|E(H)|}\right)=\exp\bigl(-(1+o(1))\phi(H, n, p, \delta)\bigr)\,,
\]
where $\phi(H, n, p, \delta)$ is as defined in~\eqref{eq:disvar} and the $o(1)$-term goes to zero as $n \to \infty$.
\end{thm}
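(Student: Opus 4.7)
The plan is to establish matching upper and lower bounds on $\P(t(H,\cG_{n,p}) \geq (1+\delta) p^{|E(H)|})$ that both agree with $\exp(-\phi(H,n,p,\delta))$ at leading exponential order. The lower bound is straightforward: take a near-optimizer $G_n \in \sG_n$ of the variational problem, and consider the event that each edge indicator $X_{ij}$ of $\cG_{n,p}$ lies in a narrow interval around the corresponding weight $a_{ij}(G_n)$. Independence of the edges makes this event have probability $\exp(-(1+o(1)) I_p(G_n))$, while continuity of $t(H,\cdot)$ as a polynomial in the edge weights guarantees $t(H,\cG_{n,p}) \geq (1+\delta-o(1)) p^{|E(H)|}$ on the event; a small slack in $\delta$, absorbed into the $o(1)$, yields the lower bound.

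The upper bound is where the nonlinear large deviation machinery is required. Set $N = \binom{n}{2}$, $f(X) = n^{|V(H)|} t(H, G(X))$ for $X \in \{0,1\}^N$ (here $G(X)$ is the graph with edge indicators $X$), and $A = \{f(X) \geq (1+\delta) p^{|E(H)|} n^{|V(H)|}\}$. The strategy is to approximate the log-Laplace transform $\log \E \exp(\theta f(X))$ under product Bernoulli$(p)$ by the Gibbs variational formula $\sup_{y \in [0,1]^N}(\theta f(y) - I_p(y))$; a standard exponential tilt combined with Chebyshev then converts this approximation into the tail bound $-\log \P(A) \geq (1-o(1)) \phi(H,n,p,\delta)$. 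The accuracy of this mean-field approximation is controlled by the \emph{gradient complexity} of $f$: one must exhibit, for each small $\varepsilon > 0$, an $\varepsilon$-net $\mathcal{N}$ in $L^2$ for the image $\{\nabla f(X) : X \in \{0,1\}^N\}$ of cardinality $\exp(o(n^2 p^\Delta \log(1/p)))$.

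The partial derivative $\partial_{ij} f(X)$ equals the number of homomorphic copies of $H$ in $G(X)$ that use the edge $ij$, a polynomial of degree $|E(H)|-1$ in $X$. Covering the range of $\nabla f$ therefore reduces to simultaneously covering, for each way of distinguishing an edge in $H$, the corresponding subgraph-count functions on pairs $(i,j)$; this is in turn governed by concentration of proper-subgraph counts in $\cG_{n,p}$. The regime where these concentration estimates are sharp enough to make the cardinality of $\mathcal{N}$ fit inside the target budget $\exp(o(n^2 p^\Delta \log(1/p)))$ is exactly $p \geq n^{-\alpha_H}$. The main obstacle is therefore this gradient complexity estimate, which is the delicate combinatorial step that pins down the explicit value of $\alpha_H$ and makes the whole framework go through; once it is in place, identifying the resulting infimum of $I_p$ over achievable tilted graphs with $\phi(H,n,p,\delta)$ from Definition~\ref{def-var-prob} closes the loop.
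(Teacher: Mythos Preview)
The paper does not contain a proof of Theorem~\ref{thm:CD}; it is quoted verbatim as a result of Chatterjee and Dembo~\cite{CD14} and used as a black box. There is therefore no ``paper's own proof'' to compare your proposal against.

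That said, your sketch is a fair high-level summary of the Chatterjee--Dembo nonlinear large deviation framework: the lower bound by tilting to a near-optimizer is standard, and the upper bound does indeed hinge on controlling the covering number of the gradient image $\{\nabla f(X)\}$ and verifying that this complexity is small relative to the scale $n^2 p^\Delta \log(1/p)$, which is what forces the restriction $p \ge n^{-\alpha_H}$. One small inaccuracy: the Chatterjee--Dembo result is not really phrased via a log-Laplace/Chebyshev tilt as you describe; rather, their main theorem directly compares $\log \P(f \ge t)$ with the mean-field infimum $\inf\{I_p(y): f(y)\ge t\}$, with error terms expressed in the gradient-complexity and smoothness parameters of $f$. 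The Gibbs/Laplace picture is morally related but is not the actual mechanism of their proof.
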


\begin{rmk*}
Recently Eldan~\cite{Eldan} improved the range of validity of the above theorem to $p \ge n^{-1/(6|E(H)|)}\log n$. 
\end{rmk*}

Thanks to this theorem, solving the variational problem $\phi(H,n,p,\delta)$ asymptotically would give the asymptotic rate function for $H$ when $n^{-\alpha_H} \le p = o(1)$
(as was done for $H = K_k$ in \cite{LZ-sparse}, yielding~\eqref{eq:tail-prob-clique}).

\begin{figure}\begin{center}
\begin{tikzpicture}[font=\normalsize,plotd/.style={draw=black,dotted}]
    \node (plot1) at (0,0) {
      \includegraphics[width=.25\textwidth]{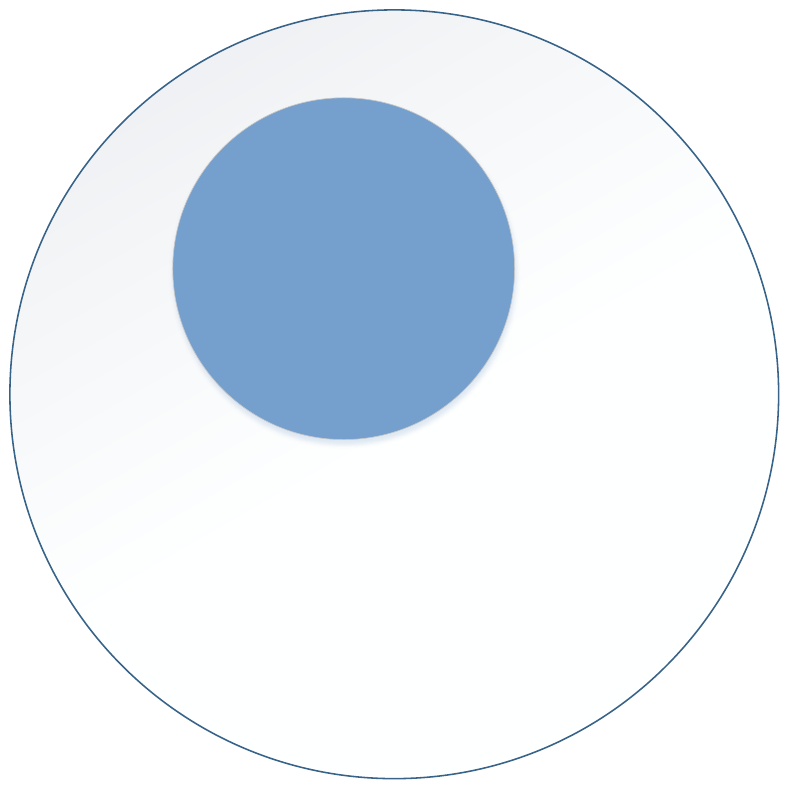}};
    \node (plot2) at (7cm,0cm) {
      \includegraphics[width=.26\textwidth]{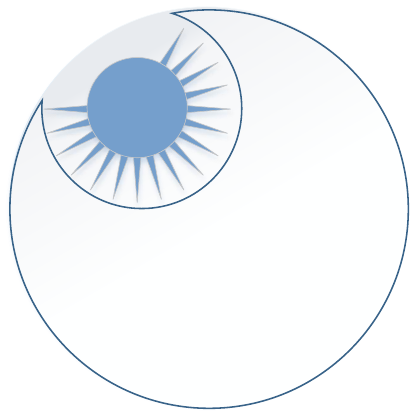}};
    \begin{scope}[shift={(plot1.south west)}]
    \node at (2,3) {\color{white}$1$};
    \node at (3,1.25) {$p$};
    \node at (2.4,3.9) {\tiny$s\sim\delta^{1/|V(H)|}p^{\Delta/2}n$};
    \end{scope}
    \begin{scope}[shift={(plot2.south west)}]
    \node at (3,1.5) {$p$};
    \node at (1.6,3.45) {\color{white}$1$};
    \node at (1.1,4.15) {\tiny$s\sim\theta p^\Delta n$};
    \end{scope}
\end{tikzpicture}
\end{center}
\vspace{-0.2cm}
\caption{Solution candidates for discrete variational problem (clique and anti-clique).}
\vspace{-0.25cm}
\label{fig:candidates}
\end{figure}

\subsection{Main Result} \label{sec:intro-main-result}

Let $H$ be a graph with maximum degree $\Delta = \Delta(H)$; recall that $H$ is \emph{regular} (or \emph{$\Delta$-regular}) if all its vertices have degree $\Delta$, and \emph{irregular} otherwise. Starting with a weighted graph $G_n$ with all edge-weights $a_{ij}$ equal to $p$, we consider the following two ways of modifying $G_n$ so it would satisfy the constraint  $t(H, G_n) \ge (1+\delta)p^{|E(H)|}$ of the variational problem \eqref{eq:disvar} (see Figure~\ref{fig:candidates}).
\begin{enumerate}[(a)]
	\item (Planting a clique) Set $a_{ij} = 1$ for all $1 \le i, j \le s$ for $s \sim  \delta^{1/|V(H)|} p^{\Delta/2} n$. This construction is effective only when $H$ is $\Delta$-regular, in which case it gives $t(H, G_n) \sim (1+\delta)p^{|E(H)|}$.
	\item (Planting an anti-clique) Set $a_{ij} = 1$ whenever $i \le s$ or $j \le s$ for $s \sim \theta p^\Delta n$ for $\theta = \theta(H,\delta)>0$ such that $P_{H^*}(\theta) = 1+\delta$, in which case $t(H, G_n) \sim (1+\delta)p^{|E(H)|}$.
\end{enumerate}
We postpone the short calculation that in each case $t(H, G_n) \sim (1+\delta)p^{|E(H)|}$ to \S\ref{sec:clique-anticlique}. Our main result (Theorem~\ref{thm:discrete-var} below) says that, for a connected graph $H$ and $n^{-1/\Delta}\ll p \ll 1$, one of these constructions has $I_p(G_n)$ that is within a $(1+o(1))$-factor of the optimum achieved by the variational problem \eqref{eq:disvar}.
For example, when $H = K_3$, the clique construction 
has $I_p(G_n) \sim \tfrac12 s^2 I_p(1) \sim \tfrac12 \delta^{2/3}n^2p^2 \log(1/p)$, while $P_{K_3}(x)=1+3x$ so $\theta = \delta/3$ and the anti-clique construction has $I_p(G_n) \sim s n I_p(1) \sim \tfrac13 \delta n^2 p^2 \log(1/p)$ (thus the clique wins if $\delta > 27/8$), exactly the bounds that were featured in~\eqref{eq:tail-prob-K3}. The following result extends~\cite[Theorems~1.1 and~4.1]{LZ-sparse} from cliques to the case of a general graph $H$. Recall $P_{H^*}(x)$ from Definitions~\ref{def-ind-poly} and~\ref{def-H*}.

\begin{thm}\label{thm:discrete-var}
Let $H$ be a fixed connected graph with maximum degree $\Delta \ge 2$. For any fixed $\delta>0$ and $n^{-1/\Delta} \ll p = o(1)$, the solution to the discrete variational problem~\eqref{eq:disvar} satisfies
\[\lim_{n \to \infty} \frac{\phi(H,n, p ,\delta)}{n^2p^\Delta\log(1/p)}=
\begin{cases}
\min\left\{\theta\,,\tfrac12 \delta^{2/|V(H)|}\right\}  &    \text{if $H$ is regular,} \\
\theta & \text{if $H$ is irregular,}
\end{cases}
\]
where $\theta=\theta(H,\delta)$ is the unique positive solution to $P_{H^*}(\theta)=1+\delta$.
\end{thm}

When combined with Theorem~\ref{thm:CD}, this yields the following conclusion for the upper tail problem.

\begin{cor} \label{cor:ldp}
Let $H$ be a fixed connected graph with maximum degree $\Delta \ge 2$.
There exists $\alpha_H > 0$ such that for $n^{-\alpha_H} \le p \ll 1$ the following holds. For any fixed $\delta>0$,
\begin{equation*}
\lim_{n \to \infty}\frac{-\log \P\left(t(H, \cG_{n, p})\geq (1+\delta)p^{|E(H)|}\right)}{n^2p^\Delta \log(1/p)}=
\begin{cases}
\min\left\{\theta\,,\tfrac12 \delta^{2/|V(H)|}\right\}  &    \text{if $H$ is regular,} \\
\theta & \text{if $H$ is irregular,}
\end{cases}
\end{equation*}
where $\theta=\theta(H,\delta)$ is the unique positive solution to $P_{H^*}(\theta)=1+\delta$.
\end{cor}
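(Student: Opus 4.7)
The plan is to derive Corollary~\ref{cor:ldp} as an immediate consequence of combining Theorem~\ref{thm:CD} with Theorem~\ref{thm1}. Theorem~\ref{thm:CD} supplies the Chatterjee--Dembo reduction
\[ -\log \P\bigl(t(H,\cG_{n,p}) \geq (1+\delta)p^{|E(H)|}\bigr) = (1+o(1))\,\phi(H,n,p,\delta) \]
valid in the range $n^{-\alpha_H} \leq p < 1$ for some explicit $\alpha_H>0$ furnished by~\cite{CD14}. Theorem~\ref{thm1} in turn identifies the leading-order asymptotic
\[ \phi(H,n,p,\delta) = \bigl(c_H(\delta)+o(1)\bigr)\,n^2 p^\Delta \log(1/p)\,, \]
valid in the range $n^{-1/\Delta} \ll p = o(1)$, where $c_H(\delta) = \min\{\theta,\tfrac12\delta^{2/|V(H)|}\}$ if $H$ is regular and $c_H(\delta)=\theta$ otherwise.

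To fuse them I would intersect the two ranges: replacing $\alpha_H$ (which is allowed to depend on $H$) by $\min(\alpha_H,1/\Delta)$ if necessary, both asymptotics hold simultaneously for $n^{-\alpha_H}\leq p \ll 1$. Substituting the second into the first and collecting error terms yields
\[ -\log\P\bigl(t(H,\cG_{n,p}) \geq (1+\delta)p^{|E(H)|}\bigr) = \bigl(c_H(\delta)+o(1)\bigr)\,n^2 p^\Delta \log(1/p)\,. \]
Dividing both sides by $n^2 p^\Delta \log(1/p)$ and letting $n \to \infty$ gives the limit stated in the corollary.

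There is no genuine obstacle, since all substantive work is already packaged in the two theorems being combined. The only minor technical point is confirming that the multiplicative $(1+o(1))$ factor inherited from Theorem~\ref{thm:CD} is harmlessly absorbed. This holds because $c_H(\delta)$ is a strictly positive constant: the value $\theta$ is the unique positive root of $P_{H^*}(\theta)=1+\delta$, which exists and is unique because $P_{H^*}$ is continuous and strictly increasing on $[0,\infty)$ with $P_{H^*}(0)=1<1+\delta$ and $P_{H^*}(x)\to\infty$ as $x\to\infty$. Consequently $\phi(H,n,p,\delta)\asymp n^2p^\Delta\log(1/p)$, and any $(1+o(1))$ prefactor on a quantity of this exact order merges into the final $o(1)$ without affecting the leading constant.
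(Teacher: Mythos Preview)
Your proposal is correct and matches the paper's approach exactly: the paper does not give a separate proof of Corollary~\ref{cor:ldp}, simply noting that it follows by combining Theorem~\ref{thm:CD} with Theorem~\ref{thm1}. Your added remarks about intersecting the ranges of $p$ and absorbing the $(1+o(1))$ factor are valid elaborations of this one-line deduction.
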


Observe that when $H$ is regular, there exists a unique
$\delta_0=\delta_0(H)>0$
such that\footnote{Indeed, $P_H(x)$ is increasing as it is a polynomial with nonnegative coefficients, so $\theta \leq \frac12 \delta^{2/|V(H)|}$ if and only if $1+\delta \le P_H(\frac12 \delta^{2/|V(H)|})$ (as $P_H(\theta) = 1 +\delta$). Since $P_H(x)$ is a polynomial of degree at most $|V(H)|/2$ (since $H$ is regular) and constant term 1, the function $f(\delta):= (P_H(\frac12\delta^{2/|V(H)|})-1)/\delta$ is decreasing for $\delta > 0$. We have $f(\delta) \to \infty$ as $\delta \to 0$ and $f(\delta) \le (2+o(1)) (\frac12 \delta^{2/|V(H)|})^{|V(H)|/2}/\delta \le 2^{1-|V(H)|/2} + o(1)$ as $\delta \to \infty$. So $f$ is decreasing and $f(\delta_0) = 1$ for some $\delta_0 > 0$, which proves the claim.}
\begin{equation}
   \label{eq-delta0-transition}
   \theta(H,\delta) \leq \tfrac12\delta^{2/|V(H)|} \quad\text{ if and only if } \quad \delta \leq \delta_0(H)\,.
 \end{equation}
That is, the leading order constant of $\phi(H,n,p,\delta)$ (giving the asymptotic upper tail) is governed by the anti-clique for $\delta \leq \delta_0$ and by the clique for $\delta \geq \delta_0$ (the above example of $H=K_3$ had $\delta_0 = 27/8$). 

We remark that our results extend (see Theorem~\ref{thm:disconnected}) to any \emph{disconnected} graph $H$. The interplay between different connected components can then cause the upper tail to be dominated not by an exclusive appearance of either the clique or the anti-clique constructions (as was the case for any \emph{connected} graph $H$, cf.~Theorem~\ref{thm:discrete-var}), but rather by an interpolation of these. See~\S\ref{sec:disconnected} for more details.

The assumption $p \gg n^{-1/\Delta}$ in Theorem~\ref{thm:discrete-var} is essentially tight in the sense that the upper tail rate function undergoes a phase transition at that location~\cite{JOR04}: it is of order $n^{2+o(1)}p^{\Delta}$ for $p \geq n^{-1/\Delta}$, and
below that threshold it becomes a function (denoted $M_H^*(n,p)$ in~\cite{JOR04}) depending on all subgraphs of $H$.
In terms of the discrete variational problem~\eqref{eq:disvar}, again this threshold marks a phase transition, as the anti-clique construction ceases to be viable for $p \ll n^{-1/\Delta}$ (recall that $s \sim \theta p^\Delta n$ in that construction). Still, as in~\cite[Theorems~1.1 and~4.1]{LZ-sparse}, our methods show that if $H$ is \emph{regular} and $n^{-2/\Delta} \ll p \ll n^{-1/\Delta}$, the solution to the variational problem is  $(1+o(1))\frac12 \delta^{2/|V(H)|}$ (i.e., governed by the clique construction).

Several of the tools that were developed here to overcome the obstacles in extending the analysis of~\cite{LZ-sparse} to general graphs (arising already for the 4-cycle) may be of independent interest and find other applications,  e.g., the crucial use of adaptively chosen degree-thresholds (see~\S\ref{sec:triangle-4cycle} for details).

One can ask to describe the random graph conditioned on having $H$-density at least $(1+\delta)p^{|E(H)|}$. Informally, our results suggest that the conditioned graph measure exhibits ``localization'', that is, the excess copies of $H$ are located in a microscopic part of the graph. In particular, we expect that it behaves like a typical  graph along with a randomly planted clique or a complete bi-partite graph (anti-clique) and the nature of the planted structure undergoes a phase transition in $\delta$, when $H$ is regular. However, unlike the dense setting~\cite{CV11,LZ-dense} where one can characterize the conditioned random graph with respect to the cut metric on graphs, we do not know of a good way to formalize the notion of being ``close to'' a planted clique or a planted anti-clique in the sparse setting.

\subsection{Examples} We now demonstrate the solution of the variational problem~\eqref{eq:disvar}, as provided by  Theorem~\ref{thm:discrete-var}, for various families of graphs (adding to the previously known~\cite{LZ-sparse} case of cliques, cf.~\eqref{eq:tail-prob-clique}).

\begin{example}[$k$-cycle: $H=C_k$]\label{ex-cycles}
It is easy to verify that $P_{C_k}(x)$ satisfies the recursion\footnote{By the definition of the independence polynomial, for any graph $H$ and vertex $v$ in it, $P_{H}(x)= P_{H_1}(x) + x P_{H_2}(x)$, where $H_1$ is obtained from $H$ by deleting $v$ and $H_2$ is obtained from $H$ by deleting $v$ and all its neighbors.}
\[ P_{C_k}(x) = P_{C_{k-1}}(x) + x P_{C_{k-2}}(x)\,,\quad P_{C_2}(x) = 2x+1 \,,\quad P_{C_3}(x) = 3x+1\,.\]
For instance, $P_{C_4}(x) = 2x^2+4x+1$ and $P_{C_5}(x)=5x^2+5x+1$; by Theorem~\ref{thm:discrete-var}, if $n^{-1/2}\ll p \ll 1$,
\begin{align*} \phi(C_4, n, p, \delta)
&\sim \min\left\{ \theta(C_4,\delta)\,,\tfrac12 \delta^{1/2}\right\} n^2p^2\log(1/p) & \mbox{ for }\quad&
\theta(C_4,\delta) = -1+\sqrt{1+\tfrac12 \delta}\,,\\
\phi(C_5, n, p, \delta)&\sim \min\left\{
\theta(C_5,\delta)\,,
\tfrac12 \delta^{2/5}  \right\}n^2p^2\log(1/p) & \mbox{ for }\quad&
\theta(C_5,\delta) = -\tfrac12+\tfrac12\sqrt{1+\tfrac45 \delta}\,.
\end{align*}
For general $k$, with square brackets denoting extraction of coefficients, $[x]P_{C_k}(x)=k$ (more generally,  $[x]P_H(x)=|V(H)|$ for any $H$), while the closely related recursion for Chebyshev's polynomials yields
\begin{equation}\label{eq-P_Ck} P_{C_k}(x)  =2^{1-k}\sum_{j=0}^{\lfloor k/2\rfloor} \binom{k}{2 j} (1+4x)^j\,,\end{equation}
and so $[x^2]P_{C_k}(x)=\frac12k(k-3)$; e.g., for any $k\geq 4$, the behavior of $\theta(C_k,\delta)$ for small $\delta$ (see Fig.~\ref{fig:ldp_cycles}) is
\[ \theta(C_k,\delta) = \tfrac{1}{k-3}\left(-1+\sqrt{1+2\delta(k-3)/k}\right) + O(\delta^3) =\tfrac1k \delta + \tfrac{3-k}{2k^2}\delta^2 + O(\delta^3)\,. \]
Finally, observe that for even $k$ we can write~\eqref{eq-P_Ck} as $ P_{C_k}(x)=\left[\frac12(\sqrt{1+4x}+1)\right]^k+\left[\frac12(\sqrt{1+4x}-1)\right]^k$,
and deduce that the value of $P_{C_k}(\frac12\delta^{2/k})$ for $\delta=2^k$ is simply $P_{C_k}(2)=2^k+1= 1+\delta$. Thus, by the remark following Corollary~\ref{cor:ldp}, the transition addressed in~\eqref{eq-delta0-transition} occurs at $\delta_0(C_k)=2^k$ for even $k$; e.g.,
\begin{align}
\lim_{n \to \infty}\frac{\phi(C_4, n, p, \delta)}{n^2p^2\log(1/p)}
&=
\begin{cases}
-1+\sqrt{1+\tfrac12 \delta}  &      \text{if } \delta < 16 \,,\\
\tfrac12 \sqrt{\delta}  &  \text{if } \delta \geq 16 \,.
\end{cases}
\label{eq:ex-C4}
\end{align}
As mentioned above, $H=C_4$ is the simplest graph for which the arguments in~\cite{LZ-sparse} did not give sharp bounds on $\phi(H,n,p,\delta)$, and its treatment is instrumental for the analysis of general graphs (see~\S\ref{sec:4cycle}).
\end{example}

\begin{example}
  [Binary tree] Letting $T_h$ denote the complete binary tree of height $h$ ($|V(T_h)|=2^h-1$), observe that, by counting independent sets excluding/including the root,  $P_{T_h}(x)$ satisfies the recursion
\[ P_{T_h}(x) = P_{T_{h-1}}(x)^2 + x P_{T_{h-2}}(x)^4\,,\quad P_{T_0}(x) = 1\,,\quad P_{T_1}(x) = x+1 \,.\]
The polynomial $P_{T_h^*}(x)$ restricts us to independent sets of $T_h$ where all degrees are $\Delta$, and therefore
\[ P_{T_h^*}(x) = P_{T_{h-2}}(x)^2\,,\]
as the restriction excludes precisely the root and leaves.
(More generally, for the $b$-ary tree ($b\geq 2$) one has $P_{T_h}(x)=P_{T_{h-1}}(x)^{b} + x P_{T_{h-2}}(x)^{b^2}$, and $P_{T_h^*}(x) = P_{T_{h-2}}(x)^{b}$.)

For instance, the binary tree on 15 vertices, $T_4$, has
\[ P_{T_4^*}(x) = P_{T_2}(x)^2 = x^4 + 6x^3 + 11x^2 + 6x + 1\,,\]
and solving $P_{T_4^*}(\theta) = 1+\delta$, we obtain, by Theorem~\ref{thm:discrete-var}, that for any $n^{-1/3}\ll p \ll 1$,
\begin{equation} \label{eq:ex-T4}
\lim_{n \to \infty}\frac{\phi(T_{4}, n, p, \delta)}{n^2 p^3 \log(1/p)}= -\tfrac32 + \tfrac12 \sqrt{5+ 4 \sqrt{1+\delta}}\,.
\end{equation}
For general $h$, we can for instance deduce from the recurrence above (and the facts $[x]P_H[x]=|V(H)|$
and $[x]P_{H^*}(x)=\#\{v: \deg(v)=\Delta\}$) that 
 $ [x]P_{T_h^*}(x)=2^{h-1}-2$ and $ [x^2]P_{T_h^*}(x) = 2^{2h-3}-7\cdot 2^{h-2}+7$ for any $h\geq 3$, using which it is easy to write $\theta(T_h,\delta)$ explicitly up to an additive $O(\delta^3)$-term.
\end{example}

\begin{example}[Complete bipartite: $H = K_{k, \ell}$ for $k \geq \ell$]
In case $k>\ell$ we have $P_{K_{k,\ell}^*}(x) = (1+x)^{\ell}$ as we only count independent sets in the $k$-regular side (of size $\ell$); thus, by Theorem~\ref{thm:discrete-var}, for $n^{-1/k}\ll p \ll 1$,
\begin{equation}\label{eq:ex-Kkl-equal}
\lim_{n \to \infty}\frac{\phi(K_{k,\ell}, n, p, \delta)}{n^2p^k\log(1/p)}= (1+\delta)^{1/\ell}-1\,.
\end{equation}
If $k=\ell$, the coefficients of $x^j$ ($j\geq 1$) are doubled, so $P_{K_{k,\ell}}(x)=2(1+x)^k - 1$ and for $n^{-1/k}\ll p \ll 1$,
\begin{equation}\label{eq:ex-Kkl-unequal}
\lim_{n \to \infty}\frac{\phi(K_{k,\ell}, n, p, \delta)}{n^2p^k\log(1/p)}= \min\left\{ \left(1+\tfrac12 \delta\right)^{1/k}-1\,,\tfrac12 \delta^{1/k}\right\}\,.
\end{equation}
\end{example}

\section{Clique and anti-clique constructions}
\label{sec:clique-anticlique}

We prove the claim at the beginning of \S\ref{sec:intro-main-result}, which gives an upper bound to the discrete variational problem $\phi(H, n, p, \delta)$. It is obtained by planting a clique of an anti-clique of appropriate size (see Figure~\ref{fig:candidates}).

\begin{ppn}\label{ppn:construction}
Let $H$ be a graph with maximum degree $\Delta$. Let $\delta > 0$ and $\theta=\theta(H,\delta)$ the unique positive solution to $P_{H^*}(\theta)=1+\delta$. 
\begin{enumerate}
\item[(a)] (Clique) If $H$ is connected and $\Delta$-regular and $n^{-2/\Delta} \ll p \ll 1$, then
\[
\phi(H,n,p,\delta) \le \bigl(\tfrac12 \delta^{2/|V(H)|} + o(1)\bigr) n^2 p^{\Delta} \log(1/p)\,.
\]
\item[(b)] (Anti-clique) For any graph $H$ with maximum degree $\Delta$ (not necessarily connected or regular), if $n^{-1/\Delta} \ll p \ll 1$, then
\[
	\phi(H,n,p,\delta) \le (\theta + o(1)) n^2 p^{\Delta} \log(1/p)\,.
\]
\end{enumerate}
\end{ppn}

\begin{proof}
(a) Let $G$ be a weighted graph on $n$ vertices with adjacency matrix $(a_{ij})_{1\le i,j \le n}$. Starting with all weights set to $p$, modify $G$ by setting $a_{ij} = 1$ whenever $i,j \le s$ for some integer $s \sim \delta^{1/|V(H)|} p^{\Delta/2} n$ to be decided. Then $I_p(G) \sim \tfrac12 s^2 I_p(1) \sim \frac12 \delta^{2/|V(H)|} p^\Delta \log(1/p)$. We will show that $s \sim \theta p^\Delta n$ implies that $t(H, G) \sim (1+\delta) p^{|E(H)|}$, so that an appropriately chosen $s \sim \delta^{1/|V(H)|} p^{\Delta/2} n$ would give $t(H, G) \ge (1+\delta) p^{|E(H)|}$, thereby showing the claimed upper bound on $\phi(H, n, p, \delta)$.

By summing over the subset of vertices of $H$ that get mapped to $\{1, \dots, s\}\subseteq V(G)$, we find
\begin{align*}
t(H, G) 
&\sim \sum_{S \subseteq V(H)} \Bigl(\frac s n\Bigr)^{|S|} \Bigl(1-\frac s n\Bigr)^{|V(H)| - |S|} p^{|E(H)|-|E(H[S])|}\,
\\
&\sim  \sum_{S \subseteq V(H)} \Bigl(\delta^{1/|V(H)|} p^{\Delta/2} \Bigr)^{|S|} p^{|E(H)|-|E(H[S])|}
\sim (1+\delta)p^{|E(H)|}.
\end{align*}
Here $H[S]$ denotes the subgraph of $H$ induced by $S$. 
The first estimate hides a $1+o(1)$ factor coming from the negligible fraction of maps $V(H) \to V(G)$ that send two adjacent vertices of $H$ to the same vertex in $G$.
For the final estimate, note that since $H$ is $\Delta$-regular and connected, we have $\Delta |S|/2 > |E(H[S])|$ for all $\emptyset \ne S \subsetneq V(H)$, and in such cases the corresponding term in the summation above is $o(p^{|E(H)|})$. The only non-negligible terms are $S = \emptyset$ and $S = V(H)$, which make up the final estimate $(1+\delta)p^{|E(H)|}$.

(b) Let $G$ be a weighted graph on $n$ vertices with adjacency matrix $(a_{ij})_{1\le i,j \le n}$. Starting with all weights set to $p$, modify $G$ by setting $a_{ij} = 1$ whenever $i \le s$ or $j \le s$ for some integer $s \sim \theta p^\Delta n$. Then $I_p(G) \sim s n I_p(1) \sim \theta n^2p^\Delta \log(1/p)$. As earlier, it remains to show  $t(H, G) \sim (1+\delta) p^{|E(H)|}$.

In computing $t(H, G)$, by summing over the subset of vertices of $H$ that get mapped to $\{1, \dots, s\} \subseteq V(G)$, we find
\begin{align*}
t(H, G) 
&\sim \sum_{S \subseteq V(H)} \left(\frac s n\right)^{|S|} \left(1-\frac s n\right)^{|V(H)| - |S|} p^{|E(H[V\setminus S])|}
\\
&\sim \sum_{S \subseteq V(H)} \theta^{|S|} p^{\Delta |S| + |E(H[V\setminus S])|}\,
\\
&\sim \sum_{\substack{S \text{ indep.\ set} \\ \text{of } H^*}} \theta^{|S|} p^{|E(H)|} = P_{H^*}( \theta ) p^{|E(H)|} = (1+\delta) p^{|E(H)|}\,,
\end{align*}
as any $S \subseteq V(H)$ that is not an independent set of $H^*$ satisfies $\Delta|S| + |E(H[V\setminus S])| > |E(H)|$ and hence contributes negligibly to the sum.
\end{proof}

\section{The graphon formulation of the variational problem}
\label{sec:graphon}

Following~\cite{LZ-sparse}, we will analyze a continuous version of the discrete variational problem~\eqref{eq:disvar}, which has the advantage of having no dependence on $n$. Recall that a \emph{graphon} is a  symmetric measurable function $W : [0, 1]^2\rightarrow [0, 1]$ (where symmetric means $W(x,y) = W(y,x)$). In the continuous version of~\eqref{eq:disvar}, $W$  replaces the edge-weighted graph $G_n$, as the latter can be viewed as a discrete approximation of a graphon (see, e.g.,~\cite{BCLSV08,BCLSV12,Lov12,LS06} for more on graph limits). We write
$
\E[f(W)] := \int_{[0,1]^2} f(W(x,y))\, \mathrm d x \mathrm d y
$.

\begin{defn}[Graphon variational problem]
For $\delta>0$ and $0<p<1$, let
\begin{equation}
\phi(H, p, \delta):=\inf\Bigl\{\tfrac12 \E[I_p(W)] : \text{graphon $W$ with } t(H, W)\geq (1+\delta)p^{|E(H)|}\Bigr\},\
\label{eq:continuous}
\end{equation}
where
\[
t(H, W):=\int_{[0, 1]^{|V(H)|}}\prod_{(i, j)\in E(H)}W(x_i, x_j) \, \mathrm d x_1\mathrm d x_2\cdots \mathrm d x_{|V(H)|}\,.
\]
\end{defn}

For example, for $H = K_3$ we wish to minimize  
\[
\E[I_p(W)]:=\int_{[0, 1]^2}I_p(W(x, y)) \, \mathrm d x\mathrm d y
\]
over all graphons $W$ whose triangle density 
\[
t(K_3, W) = \int_{[0,1]^3} W(x,y)W(x,z)W(y,z) \, \mathrm d x \mathrm d y \mathrm d z
\]
is at least $(1+\delta)p^3$.

The solution of the graphon variational problem is given by the following two theorems. Recall Definitions~\ref{def-ind-poly} and~\ref{def-H*} for the independence polynomial $P_H(x)$ and the subgraph $H^*$ of $H$ induced by its maximum degree vertices.

\begin{thm} \label{thm:main-graphon}
Let $H$ be a connected $\Delta$-regular graph. Fix $\delta>0$ and let $\theta=\theta(H,\delta)$ be the unique positive solution to $P_H(\theta)=1+\delta$. Then
\[
\lim_{p\rightarrow 0} \frac{\phi(H, p, \delta)}{p^\Delta\log(1/p)}
=
\begin{cases}
\min\left\{\theta\,,\tfrac12 \delta^{2/|V(H)|}\right\}  &    \text{if $H$ is regular,} \\
\theta & \text{if $H$ is irregular.}
\end{cases}
\]
\end{thm}

Let us deduce the continuous version, our main theorem Theorem~\ref{thm:discrete-var}, from the discrete analog.

\begin{lem} \label{lem:disccont} 
For any $H, p, n, \delta$, we have
$\phi( H, p, \delta)\le n^{-2}\phi(H, n, p, \delta)$.
\end{lem}

\begin{proof}
Given weighted graph $G_n \in \sG_n$ with adjacency matrix $(a_{ij})_{1\le i,j \le n}$, form a graphon $W^{G_n} $ as follows: divide $[0,1]$ into $n$ equal-length intervals $I_1,I_2, \ldots, I_n$ and set $W^{G_n}(x,y)=a_{ij}$ if $x\in I_i, y \in I_j$ and $i\neq j$, and $W^{G_n}(x, y)=p$ if $x,y \in I_i$ for some $i$. The lemma follows after noting that $t(H,G_n) \le t(H,W^{G_n})$ and
$I_{p}(W^{G_n})=n^{-2} I_p(G_n)$ (diagonal entries contribute $0$ to $I_{p}(W^{G_n})$).
\end{proof}

\begin{proof}[Proof of Theorem~\ref{thm:discrete-var} assuming Theorem~\ref{thm:main-graphon}] 
The upper bound to $\phi(H,n,p,\delta)$ is given by Proposition~\ref{ppn:construction}. The lower bound follows by Theorem~\ref{thm:main-graphon} and Lemma~\ref{lem:disccont}.
\end{proof}

It remains to prove Theorem~\ref{thm:main-graphon}, which is the goal for the rest of the paper. Note that the upper bound to $\phi(H,p,\delta)$ follows by Proposition~\ref{ppn:construction} and Lemma~\ref{lem:disccont}. Alternatively, consider the graphon analogs of the clique and anti-clique constructions (see Figure~\ref{fig:graphon-clique-anticlique}):
\begin{enumerate}
	\item[(a)] (Clique graphon) Modify the constant graphon $W \equiv p$ by setting $W(x,y) = 1$ whenever $x,y \in [0,a]$ for $a \sim \delta^{1/|V(H)|}$.
	\item[(a)] (Anticlique graphon) Modify the constant graphon $W \equiv p$ by setting $W(x,y) = 1$ whenever $\min\{x,y\} \in [0,b]$ for $b \sim \theta p^{\Delta}$.
\end{enumerate}
By essentially the same calculations as in \S\ref{sec:clique-anticlique}, we have $t(H, W) \sim (1+\delta)p^{|E(H)|}$ for both graphons above. Calculating their entropies yields the claimed upper bounds to $\phi(H, p, \delta)$.

\begin{figure}
\vspace{-0.1cm}
\includegraphics[scale=.65]{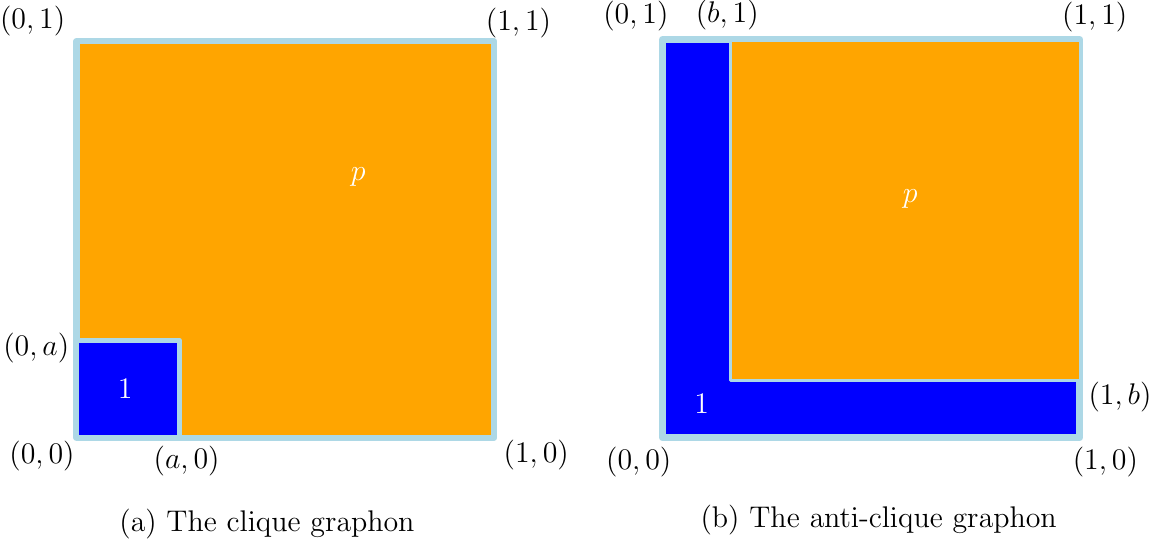}
\vspace{-0.25cm}
\caption{Solution candidates for the graphon variational problem ($a\asymp p^{\Delta/2}$ and $b \asymp p^\Delta$).}
\label{fig:graphon-clique-anticlique}
\vspace{-0.25cm}
\end{figure}

\section{Preliminaries}
\label{sec:prelim}
In this section, we recall various relevant estimates from~\cite{LZ-sparse}, used there to solve the continuous variational problem~\eqref{eq:continuous} for the case of cliques. A key inequality used both in~\cite{LZ-sparse} and in its prequel dealing with dense graphs~\cite{LZ-dense} is the following generalization of H\"older's inequality \cite[Theorem~2.1]{Fin92} (closely related to the Brascamp--Lieb inequalities~\cite{BL76}).
\begin{thm}[Generalized H\"older's inequality] 
	\label{thm:holder} 
	Let $\mu_1,\mu_2,\ldots \mu_n$ be probability measures on $\Omega_1,\ldots \Omega_n$ resp., and let $\mu = \prod_{i=1}^n\mu_i$.
	Let $A_1 \ldots A_m$ be non-empty subsets of $[n] = \{1,\ldots n\}$ and for $A\subseteq [n]$ put $\mu_A=\prod_{j\in A}\mu_j $ and $\Omega_A=\prod_{j\in A} \Omega_j$.
	Let $f_i\in L^{p_i}(\Omega_{A_i},\mu_{A_i})$ for each $i\in [m]$, and further suppose that $\sum_{i\colon A_i\ni j}(1/ p_i)\le 1$ for all $j\in [n]$.
	Then
	\[
		\int \prod_{i=1}^m \left|f_i \right| \mathrm d\mu \le \prod_{i=1}^m \left(\int \left|f_i\right|^{p_i}\mathrm d\mu_{A_i}\right)^{1/p_i}\,.
	\]
\end{thm}
Note that, in particular, if every element of $[n]$ is contained in at most $\Delta$ many sets $A_j$, then one can take $p_i = \Delta$ for all $i \in [m]$, giving the inequality $\int f_1\dots f_m \mathrm d \mu \le \prod_{i=1}^m \big(\int \left|f_i\right|^{\Delta}\mathrm d\mu_{A_i}\big)^{1/\Delta}$.

We will mostly be applying the generalized H\"older's inequality with each $A_i$ being a two-element set corresponding to an edge of a graph, and all $p_i$'s set to the maximum degree of the graph. Though there are a few tricky cases where it will be important to use non-uniform $p_i$'s.

Let $H$ be any graph with maximum degree $\Delta$, and let $W$ be a graphon with $t(H, W)\geq (1+\delta)p^{|E(H)|}$. Since $I_p$ is convex and decreasing from $0$ to $p$ and increasing from $p$ to $1$, we may assume $W\geq p$, i.e.,
\begin{equation}
U:= W -p
\quad\text{satisfies} \quad
0 \leq U \leq 1-p 
\quad\text{and}\quad
t(H,p+U) \geq (1+\delta)p^{|E(H)|}\,.
\label{eq:Wgeqp}
\end{equation}
For $b\in(0,1]$, define the set $B_b$ of points $x$ with high \emph{normalized degree} $d(x)$ in $U$ by
\begin{equation}
B_b = B_{b}(U):=\{x:d_U(x)\ge b\}\,,\quad\text{ where }\quad d(x) = d_U(x):=\int_0^1 U(x, y)\,\mathrm d y\,.
\label{eq:highdeg}
\end{equation}
Hereafter, the dependence on $U$ will be dropped from $B_{b}(U)$ and $d_U(x)$, whenever the graphon $U$ is clear from the context.

By Proposition~\ref{ppn:construction}, it suffices to only consider graphons $U$ satisfying $\E\left[I_p(p+U)\right] \lesssim p^\Delta I_{p}(1)$, where the hidden constant may depend on $H$ and $\delta$. The following consequences of this bound will be frequently used later on.
\begin{lem} \label{lem:apriori}
	Let $U$ be a graphon satisfying\footnote{More precisely, the statement is that for every constant $C > 0$ there is some constant $C'>0$ such that if \eqref{eq:apriori} holds with constant hidden $C$, then \eqref{eq:EU-upper}--\eqref{eq:low-deg-sq} all hold with hidden constant $C'$}
	\begin{equation} \label{eq:apriori}
	\E\left[I_p(p+U)\right] \lesssim p^\Delta I_{p}(1)\,.
	\end{equation}
	Then
	\begin{equation}\label{eq:EU-upper}
		\E[U] \lesssim p^{(\Delta+1)/2}\sqrt{\log(1/p)}\,,
	\end{equation}
	and 
	\begin{equation}\label{eq:EU2-upper}
		\E[U^2]  \lesssim p^\Delta\,,
	\end{equation}
	and furthermore $B_b = \{x : d(x) \ge b\}$, with $p = o(b)$, satisfies
	\begin{equation}\label{eq:B_b-upper}
	\lambda(B_b) \lesssim \frac{p^\Delta}{b}\,,
	\end{equation}
	where $\lambda$ denotes the Lebesgue measure, and, writing $\overline B_b := [0,1]\setminus B_b$,
	\begin{equation}\label{eq:low-deg-sq}
	\int_{\overline B_b}d(x)^2 \,\mathrm d x \lesssim p^\Delta b\,.
	\end{equation}
\end{lem}

We will prove Lemma~\ref{lem:apriori} shortly. The following estimates for $I_p(x)$ were given in~\cite{LZ-sparse}. The $\sim$ notation below is with respect to limits as $p\to 0$.
\begin{lem}[{\cite[Lemma~3.3]{LZ-sparse}}]\label{est1}
If $0\le  x \ll p$, then $I_p(p + x) \sim \frac12 x^2/p$, whereas when $p \ll x \le 1- p$ we have
$I_p(p + x) \sim x \log(x/p)$.
\end{lem}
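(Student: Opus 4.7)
My plan is to reduce both claims to bookkeeping about the derivatives of $f(x) := I_p(p+x)$. Direct differentiation gives $f(0)=f'(0)=0$ and $f''(u) = \tfrac{1}{p+u} + \tfrac{1}{1-p-u}$, from which the first regime $0 \le x \ll p$ follows immediately: by Taylor's theorem with integral remainder, $f(x) = \int_0^x (x-t)\,f''(t)\,\rd t$, and since $0 \le t \le x \ll p$ (with $p \to 0$) one has $f''(t) = \tfrac{1+o(1)}{p}$ uniformly in $t$, so $f(x) = (1+o(1))\,x^2/(2p)$.

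For the second regime $p \ll x \le 1-p$, I would split $I_p(p+x) = A + B$ with $A := (p+x)\log\bigl(\tfrac{p+x}{p}\bigr)$ and $B := (1-p-x)\log\bigl(\tfrac{1-p-x}{1-p}\bigr)$. For $A$, writing $\log\tfrac{p+x}{p} = \log\tfrac{x}{p} + \log(1+\tfrac{p}{x})$ and using $p/x \to 0$ yields $A = x\log(x/p) + p\log(x/p) + O(p)$; both correction terms are $o(x\log(x/p))$ since $p = o(x)$ and $\log(x/p)\to\infty$. For $B$, note that $|(1-v)\log(1-v)| \le 1/e$ on $[0,1]$, so $|B| = O(1)$; this is already $o(x\log(x/p))$ whenever $x \log(x/p) \to \infty$, which is automatic when $x$ is bounded away from $0$ because then $x\log(x/p) \gtrsim \log(1/p)$. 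In the remaining sub-case $x \to 0$ with $x \gg p$, I would instead invoke the Taylor refinement $B = -x + O(x^2)$ and note $-x = o(x\log(x/p))$, again because $\log(x/p)\to\infty$.

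The computation is entirely routine; the only mild subtlety is the sub-case analysis for $B$ in the second regime, which ultimately reduces to the single observation that $x/p \to \infty$ forces $\log(x/p) \to \infty$, making every accumulated error term negligible against the leading order $x\log(x/p)$.
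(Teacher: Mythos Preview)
Your argument is correct: the Taylor expansion via $f(0)=f'(0)=0$ and $f''(t)=\tfrac{1}{p+t}+\tfrac{1}{1-p-t}$ handles the first regime cleanly, and the decomposition $I_p(p+x)=A+B$ with the sub-case split on whether $x\to 0$ is a valid way to control the second regime. The paper does not supply its own proof of this lemma---it is quoted verbatim from \cite[Lemma~3.3]{LZ-sparse}---so there is nothing to compare against here; your write-up is a reasonable self-contained verification of a result the paper treats as a black box.
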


\begin{lem}[{\cite[Lemma~3.4]{LZ-sparse}}]\label{est2} There is some constant $p_0 > 0$ such that for every $0 < p \le p_0$,
\[ I_p(p + x)\ge {(x/b)}^{2}I_p(p + b)\qquad\mbox{ for any $0\le x \le b\le 1-p- \log(1-p)$}\,.\]
\end{lem}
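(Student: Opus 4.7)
The inequality $I_p(p+x)\ge(x/b)^{2}I_p(p+b)$ is equivalent to the function $\psi(x):=I_p(p+x)/x^{2}$ being non-increasing in $x$, so my plan is to prove this monotonicity on the interval of interest. A direct computation gives $x^{3}\psi'(x)=xI_p'(p+x)-2I_p(p+x)$, so it suffices to show that
\[
f(x)\;:=\;2I_p(p+x)-xI_p'(p+x)\;\geq\;0
\]
throughout the range of $x$ required by the lemma.

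I would first collect the data at $x=0$: since $I_p(p)=I_p'(p)=0$ and $I_p''(y)=[y(1-y)]^{-1}$, one has $f(0)=f'(0)=0$ and
\[
f''(x)\;=\;-\,xI_p'''(p+x)\;=\;-\,\frac{x\bigl(2(p+x)-1\bigr)}{\bigl[(p+x)(1-p-x)\bigr]^{2}},
\]
whose sign agrees with that of $\tfrac12-p-x$. On the first range $x\in[0,\tfrac12-p]$ this gives $f''\ge0$, which together with $f'(0)=0$ forces $f'\ge0$ and hence $f\ge0$ on the whole of this sub-interval.

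For the remaining range I would exploit unimodality of $f$. Since $f''<0$ on $(\tfrac12-p,1-p)$, $f'$ is strictly decreasing there; using $f'(\tfrac12-p)>0$ and the fact that $f'(x)\to-\infty$ as $x\to1-p$ (the simple pole of $I_p''$ eventually dominates the logarithmic blow-up of $I_p'$), $f'$ has a unique zero $x^{*}$, so $f$ is unimodal on $[0,1-p)$. To close the argument it remains to locate a second zero of $f$ past $x^{*}$: a clean evaluation using $I_p(1-p)=(1-2p)\log\frac{1-p}{p}$ and $I_p'(1-p)=2\log\frac{1-p}{p}$ yields the cancellation
\[
f(1-2p)\;=\;2(1-2p)\log\tfrac{1-p}{p}\;-\;(1-2p)\cdot 2\log\tfrac{1-p}{p}\;=\;0.
\]
By unimodality this forces $x^{*}\le 1-2p$ and hence $f\ge0$ throughout $[0,1-2p]$, which for all sufficiently small $p$ contains the range of $b$ stipulated by the lemma.

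The hard part will be the second range, where the naive lower bound $I_p(p+x)\gtrsim x^{2}I_p''(p)$ is lost: the curvature of $I_p$ first crashes as $p+x$ passes $\tfrac12$ and then explodes as $p+x\to1$, so one cannot control $\psi$ by pointwise convexity alone. The neat identity $f(1-2p)=0$, stemming from the parallel dependence of $I_p'(1-p)$ and $I_p(1-p)/(1-2p)$ on $\log\frac{1-p}{p}$, is the algebraic coincidence that makes the unimodality argument close precisely at the correct endpoint.
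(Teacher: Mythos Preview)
The paper does not supply its own proof of this lemma---it is quoted from \cite{LZ-sparse}---so there is no in-paper argument to compare against. Your approach is correct and clean: the reduction to monotonicity of $\psi(x)=I_p(p+x)/x^{2}$, the computation $f''(x)=-xI_p'''(p+x)$ and its sign analysis, the resulting unimodality of $f$ on $[0,1-p)$, and the exact evaluation $f(1-2p)=0$ all go through and together yield the inequality on the sharp interval $0\le x\le b\le 1-2p$.

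The only issue is your final sentence. As printed, the upper bound $1-p-\log(1-p)$ is \emph{larger} than $1-p$ (since $-\log(1-p)>0$ for $p\in(0,1)$), so the stated range is certainly not contained in $[0,1-2p]$; in fact your own analysis shows $\psi$ turns increasing on $(1-2p,1-p)$, so the inequality genuinely fails past $1-2p$ (take $x=1-2p$ and $b$ slightly larger). This is almost certainly a sign slip in the transcription: the intended bound is $1-p+\log(1-p)=1-p-\log\tfrac{1}{1-p}$, which equals $1-2p-\tfrac{p^{2}}{2}-\cdots<1-2p$ and hence lies inside the range your argument covers (and is consistent with the choice $b=1-p-1/\log(1/p)$ used in the subsequent corollary). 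You should flag the typo rather than assert the containment outright.
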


\begin{cor}[{\cite[Corollary 3.5]{LZ-sparse}}]\label{est3} There is some constant $p_0 > 0$ such that for every $0 < p \le p_0$,
\[I_p(p + x) \ge x^2I_p(1 - 1/ \log(1/p)) \sim x^2I_p(1)\qquad\mbox{ for any  $0 \le x \le 1 - p$}\,.\]
\end{cor}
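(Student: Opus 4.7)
The plan is to derive Corollary \ref{est3} directly from Lemma \ref{est2} by choosing the threshold $b$ as close to $1-p$ as possible, subject to keeping $I_p(p+b)$ of the same order as $I_p(1)$. A natural choice is
\[ b := 1 - p - \tfrac{1}{\log(1/p)}\,,\qquad\mbox{so that}\qquad p+b = 1 - \tfrac{1}{\log(1/p)}\,. \]
First I would verify that this $b$ satisfies the hypothesis $b\le 1-p-\log(1-p)$ of Lemma \ref{est2}. Since $\log(1-p)<0<1/\log(1/p)$ for small $p$, the required inequality $-1/\log(1/p)\le -\log(1-p)$ is immediate.

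Next, for $0\le x\le b$, Lemma \ref{est2} yields
\[ I_p(p+x) \ge (x/b)^2\, I_p(p+b) = (x/b)^2\, I_p\bigl(1 - \tfrac{1}{\log(1/p)}\bigr) \ge x^2\, I_p\bigl(1 - \tfrac{1}{\log(1/p)}\bigr)\,, \]
the last step using $b\le 1-p<1$. For the remaining range $b<x\le 1-p$, I would appeal to the monotonicity of $I_p$ on $[p,1]$: then $I_p(p+x)\ge I_p(p+b)=I_p(1-1/\log(1/p))$, which is at least $x^2\, I_p(1-1/\log(1/p))$ since $x\le 1-p<1$.

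It remains to establish the asymptotic equivalence $I_p(1-1/\log(1/p))\sim I_p(1)=\log(1/p)$ as $p\to 0$. Setting $L:=\log(1/p)$ and $y:=1-1/L$, one computes
\[ y\log(y/p)=(1-1/L)\bigl(L+\log(1-1/L)\bigr)=L-1+O(1/L)\,, \]
while $(1-y)\log((1-y)/(1-p))=L^{-1}\bigl(-\log L-\log(1-p)\bigr)=-(\log L)/L+O(p/L)=o(1)$. Adding these yields $I_p(y)=L-1+o(1)\sim L$, completing the proof. There is no substantive obstacle here; the main thing to get right is the choice of $b$ (essentially at the boundary of Lemma \ref{est2}'s range) and the routine bookkeeping in the asymptotic expansion.
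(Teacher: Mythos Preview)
Your proof is correct and is precisely the intended derivation: the paper does not supply its own argument here (the corollary is quoted verbatim from \cite{LZ-sparse}), but your choice $b = 1 - p - 1/\log(1/p)$ in Lemma~\ref{est2}, together with monotonicity of $I_p$ on $[p,1]$ for the tail range $b < x \le 1-p$ and the routine asymptotic $I_p(1-1/\log(1/p)) = \log(1/p) - 1 + o(1)$, is exactly how one recovers the corollary from the lemma.
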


As a consequence, observe that
\begin{equation}\label{eq-x32-bound}
x^{3/2} \lesssim I_p(p+x)/ I_p(1) + o(p^2)\quad\mbox{ for any $0\leq x \leq 1-p$}\,.
\end{equation}
Indeed, this is trivial for $x \ll p^{4/3}$ due to the $o(p^2)$ term;
if $p^{2/3} \leq x \leq 1-p$ then $I_p(p+x)  \gtrsim  x I_p(1)$ by Lemma~\ref{est1}; and in between, when $p^{4/3} \lesssim x \leq p^{2/3}$, we have 
\[
I_p(p+x) \stackrel{\text{Lem~\ref{est2}}}{\ge} (x/p^{2/3})^2 I_p(p+p^{2/3}) \gtrsim x^{3/2} p^{-2/3} I_p(p+p^{2/3}) \stackrel{\text{Lem~\ref{est1}}}{\gtrsim} x^{3/2} I_p(1)\,.
\]

\begin{proof}[Proof of Lemma~\ref{lem:apriori}]
From Lemma~\ref{est1} we have $I_p\big(p + a p^{(\Delta+1)/2}\sqrt{\log(1/p)}\big)\sim \frac12 a^2 p^\Delta I_p(1)$  for any $p\leq p_0$ and fixed $a\geq 0$ and $\Delta\geq 2$. However, $I_p(p+\E[U]) \leq \E\left[I_p(p+U)\right] \lesssim p^\Delta I_p(1)$ by the convexity of $I_{p}(\cdot)$ and~\eqref{eq:apriori}. Therefore, by the monotonicity of $I_p(p+x)$ for $x\geq 0$ we obtain the upper bound $\E[U] \lesssim p^{(\Delta+1)/2} \sqrt{\log(1/p)}$, proving \eqref{eq:EU-upper}. Finally, using \eqref{eq:apriori} and Corollary~\ref{est3} we obtain $\E[ U^{2} ] \lesssim \E [I_p(p+U)]/I_p(1)  \lesssim p^\Delta$, proving \eqref{eq:EU2-upper}.

By the convexity of $I_{p}(\cdot)$, for any $b \gg p$,
\[
\E\left[I_p(p+U)\right]
=\int_{[0,1]^2}I_p(p+U(x,y)) \, \mathrm d x\mathrm d y \ge \int_0^1 I_{p}(p+d_U(x)) \, \mathrm d x \ge \lambda(B_b)I_{p}(p+b)\,.
\]
It follows from  Lemma~\ref{est1} (combined with~\eqref{eq:apriori}) that for any $p\ll b \le 1-p$,
\[
\lambda(B_b) \le \frac{\E\left[I_p(p+U)\right]}{I_p(p+b)} \lesssim \frac{p^\Delta I_p(1)}{b\log(b/p)} \lesssim \frac{p^\Delta}{b}\,,
\]
proving \eqref{eq:B_b-upper}. Furthermore, by the convexity of $I_p(x)$ and Lemma~\ref{est2},
\begin{align*}
\E\left[ I_{p}(p+U)\right]\geq \int_{\overline B_b}I_{p}(p+d(x)) \,\mathrm d x\ge I_p(p+b) \int_{\overline B_b}\left(d(x)/ b\right)^2 \,\mathrm d x\,.
\end{align*}
Combining these, we get
\[
\int_{\overline B_b}d(x)^2 \,\mathrm d x \le \frac{b^2 \E\left[ I_p(p+U)\right]}{I_p(p+b)}\lesssim p^\Delta b\,,
\]
proving \eqref{eq:low-deg-sq}.
\end{proof}

\section{The triangle and the 4-cycle}
\label{sec:triangle-4cycle}

In the first part of this section, we recall, from \cite{LZ-sparse}, a short proof of Theorem~\ref{thm:main-graphon} for the triangle. In the second part, we prove it for the 4-cycle---which already illustrates the difficulties in extending the arguments of~\cite{LZ-sparse} to general graphs. A key new idea for the 4-cycle is to use an adaptively chosen degree threshold instead of a fixed threshold. This section is not needed for the proof of the general result, but it may be helpful in motivating the general analysis later on.

\subsection{The variational problem for the triangle}
\label{sec:triangles}
The case of $K_3$ (and larger cliques) was resolved in~\cite{LZ-sparse} via a divide-and-conquer approach: roughly put, by setting a certain \emph{degree threshold} $b=b(p)$ one finds that, in any graphon whose entropy is of the correct \emph{order}, the Lebesgue measure of the set $B_b$ of high degree points (defined in~\eqref{eq:highdeg}) asymptotically determines the surplus of $K_{1,2}$ copies (as in the anti-clique graphon), whereas the points in $\overline B_b$ are left only with the possibility of contributing extra triangles through ``cliques.'' We include a (slightly condensed) version of this proof, 
and explain why a more sophisticated cut-off $b(p)$ (tailored to each $U$) is needed for a general $H$.

\begin{thm}[{\cite[Theorem 2.2]{LZ-sparse}}] \label{th:triangle}
Fix $\delta >0$. As $p\to 0$,
\[
\phi(K_3, p, \delta)\sim\min\left\{\tfrac12 \delta^{2/3}\,,\tfrac13 \delta\right\}p^2\log (1/p)\,.
\]
\end{thm}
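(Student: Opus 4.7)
The upper bound $\phi(K_3, p, \delta) \leq (1+o(1))\min\{\tfrac12\delta^{2/3}, \delta/3\} p^2 \log(1/p)$ follows immediately from Proposition~\ref{ub1}, so the task is the matching lower bound. Take a graphon $W = p + U$ with $U \geq 0$ and $t(K_3, W) \geq (1+\delta)p^3$; the upper bound lets me assume $\E[I_p(W)] \lesssim p^2 \log(1/p)$, so the a priori estimates of \S\ref{sec:prelim} apply. Expanding the triangle density yields $\delta p^3 \leq 3 p^2 \E[U] + 3 p \|d\|_2^2 + t(K_3, U)$.

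Fix a degree threshold $b = b(p)$ with $p \ll b \ll 1$, to be chosen adaptively to $U$, and set $B := B_b(U)$. By~\eqref{tail} and Cauchy--Schwarz, $\int_{\overline B} d^2 \lesssim p^2 b$ and $\int_{\overline B} d \lesssim p \sqrt{b}$, giving $\E[U] \leq \lambda(B) + o(p)$ and $\|d\|_2^2 \leq \lambda(B) + o(p^2)$. Splitting $t(K_3, U)$ by how many of the three vertices lie in $B$ and applying Cauchy--Schwarz on the mixed terms (which exploits $d \leq b$ on $\overline B$) reduces the constraint to
\[
\delta p^3 \leq 3(1+o(1)) p \lambda(B) + t(K_3, U|_{\overline B \times \overline B}) + o(p^3);
\]
setting $s := p^{-3}\, t(K_3, U|_{\overline B \times \overline B})$ gives $\lambda(B) \geq (1-o(1))(\delta - s) p^2 /3$. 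I next produce two disjoint, hence additive, entropy lower bounds. From $\overline B \times \overline B$: Lemma~\ref{hol1} with $p_i = 3$ on each edge yields $t(K_3, U|_{\overline B^2}) \leq \|U|_{\overline B^2}\|_2^3$, so $\|U|_{\overline B^2}\|_2^2 \geq s^{2/3} p^2$, and Corollary~\ref{est3} gives $\int_{\overline B^2} I_p(W) \geq s^{2/3} p^2 \log(1/p)(1-o(1))$. From the region touching $B$: by symmetry of $W$ and inclusion--exclusion,
\[
\int_{(B \times [0,1]) \cup ([0,1] \times B)} I_p(W) = 2 \int_{B \times [0,1]} I_p(W) - \int_{B \times B} I_p(W),
\]
where the correction is $O(\lambda(B)^2 \log(1/p)) = o(\lambda(B) \log(1/p))$; a careful argument (see below) then yields $\int_{B \times [0,1]} I_p(W) \geq (1-o(1)) \lambda(B) \log(1/p)$, for total contribution $\geq \tfrac23(\delta - s) p^2 \log(1/p)(1-o(1))$.

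Adding these contributions from disjoint regions,
\[
\E[I_p(W)] \geq \bigl[\tfrac23(\delta - s) + s^{2/3}\bigr] p^2 \log(1/p)(1-o(1)).
\]
The function $f(s) := s^{2/3} + \tfrac23(\delta - s)$ is strictly concave on $(0,\infty)$ (indeed $f''(s) = -\tfrac29 s^{-4/3} < 0$), so its minimum on $[0, \delta]$ is attained at an endpoint: $\min\{f(0), f(\delta)\} = \min\{\tfrac23 \delta,\, \delta^{2/3}\}$. Halving and recalling $\phi = \tfrac12 \E[I_p(W)]$ gives $\phi(K_3, p, \delta) \geq (1-o(1))\min\{\delta/3,\, \tfrac12 \delta^{2/3}\}\, p^2 \log(1/p)$, as desired.

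The main obstacle is the sharp factor in $\int_{B \times [0,1]} I_p(W) \geq (1-o(1)) \lambda(B) \log(1/p)$: naively Jensen gives only $\lambda(B) I_p(p+b) \sim \lambda(B) b \log(1/p)$, which loses a factor of $b$. Recovering the full $\log(1/p)$ amounts to showing that a near-optimal $U$ has most of $B$ with $d(x) \approx 1$, not merely $d(x) \geq b$. This is achieved by an adaptive choice $b = b(U)$ tailored to the level sets of $d_U$---a technique whose generalization drives the proof of Theorem~\ref{thm:irregular} for general $H$, as foreshadowed in the excerpt.
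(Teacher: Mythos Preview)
Your overall architecture is right, and the upper bound plus the reduction of the constraint to $3p\|d\|_2^2 + t(K_3,U|_{\overline B^2}) \geq (\delta - o(1))p^3$ are fine. The gap is exactly the step you flag: the claimed bound
\[
\int_{B\times[0,1]} I_p(W) \;\geq\; (1-o(1))\,\lambda(B)\,\log(1/p)
\]
is \emph{false}, and no adaptive choice of $b$ rescues it. Take $A=[0,m]$ with $m=(4\delta/3)p^2$, $C=[0,\tfrac12]$, and $U=(1-p)\mathbf 1_{(A\times C)\cup(C\times A)}$. Then $d_U\equiv \tfrac12$ on $A$ and $d_U\equiv m$ on $C\setminus A$, so for every $b$ with $p\ll b\ll 1$ one has $B_b=A$, $\lambda(B_b)=m$, while the entropy carried by $B\times[0,1]\cup[0,1]\times B$ equals the total entropy, namely $m\log(1/p)$ --- only half of the $2m\log(1/p)$ your inequality would demand. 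This $U$ satisfies $t(K_3,p+U)\sim(1+\delta)p^3$ and the a~priori bound~\eqref{apriori1}, so it is a legitimate test case. The heuristic ``most of $B$ has $d(x)\approx 1$'' simply need not hold.

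The paper sidesteps this entirely by tracking $\delta_1:=p^{-2}\!\int_{B\times\overline B}U^2$ (not $\lambda(B)$) alongside $\delta_2:=p^{-2}\!\int_{\overline B\times\overline B}U^2$. Cauchy--Schwarz gives $t(K_{1,2},U)\leq(\delta_1+o(1))p^2$, so the constraint becomes $3\delta_1+\delta_2^{3/2}\geq\delta-o(1)$; and Corollary~\ref{est3} applied \emph{pointwise} ($I_p(p+x)\geq x^2 I_p(1)$) gives $\tfrac12\E[I_p(W)]\geq(1-o(1))(\delta_1+\tfrac12\delta_2)p^2\log(1/p)$ with no need to know how $U$ distributes its mass over $B$. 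Minimizing $\delta_1+\tfrac12\delta_2$ under the constraint finishes the proof. In particular, \emph{no} adaptive threshold is used for $K_3$: any fixed $b$ with $\sqrt{p\log(1/p)}\ll b\ll 1$ works. The adaptive-$b$ idea enters the paper only at $C_4$, where it handles a genuinely different obstruction (mixed embeddings with one vertex in $B$) rather than the degree-profile issue you are trying to force here. Your argument is easily repaired by replacing $\lambda(B)$ with $\int_B d^2$ (or $\delta_1$) throughout: then Jensen plus Corollary~\ref{est3} give $B$-part entropy $\geq 2\int_B d^2\,\log(1/p)$, and the constraint $\int_B d^2\geq(\delta-s)p^2/3$ yields your desired $(c)$ directly.
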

\begin{proof}
Let $W=p+U$ with $t(K_3, W)\geq (1+\delta)p^3$. Assume $U$ is nonnegative and  satisfies \eqref{eq:apriori} (or else we are done). Expanding $t(K_3, W)$ in terms of $U$,
\begin{equation}
t(K_3, W)-p^3=t(K_3, U)+3p\,t(K_{1, 2}, U)+3p^2\,\E[U]\geq \delta p^3\,.
\label{eq:triangle1}
\end{equation}
Now, $\E[U] = o(p)$ by~\eqref{eq:EU-upper},  and so~\eqref{eq:triangle1} reduces to
\begin{equation}
t(K_3, U)+3p\,t(K_{1, 2}, U)\geq \left(\delta -o(1)\right) p^3\,.
\label{eq:triangle2}
\end{equation}
Let $B_b=\{x: d(x)> b\}$ as in~\eqref{eq:highdeg}. By~\eqref{eq:B_b-upper}, for any $p \ll b \ll 1-p$ we have
\begin{equation}
\int_{B_b\times B_b}U(x, y)^2\,\mathrm d x\mathrm d y\leq \lambda(B_b)^2 \lesssim p^4/b^2 \ll p^2\,.
\label{Usmall}
\end{equation}
Let the degree threshold be some function $b=b(p)$ such that $\sqrt{p \log(1/p)} \ll b \ll 1$, and note that
\begin{equation}
\int_{[0,1]^3} U(x, y)U(y, z)U(y, z)\pmb 1\{x\in B_b \text{ or } y\in B_b \text{ or } z \in B_b\}\mathrm d x\mathrm d y\mathrm d z\leq 3\lambda(B_b)\E[U] \ll p^3\,,
\label{eq:1B}
\end{equation}
where the last step uses~\eqref{eq:B_b-upper} and~\eqref{eq:EU-upper}. Let
\begin{equation}
\theta_b := p^{-2}\int_{B_b\times \overline B_b}U(x,y)^2\,\mathrm d x\mathrm d y 
\quad\text{and}\quad 
\eta_b :=p^{-2}\int_{\overline B_b\times \overline B_b}U(x,y)^2\,\mathrm d x\mathrm d y\,.
\label{eq:theta-eta-2}
\end{equation}
We deduce from~\eqref{eq:1B} and generalized H\" older's inequality (Theorem~\ref{thm:holder}) that
\begin{align*}
t(K_3, U) &=
\int_{\overline B_b \times \overline B_b\times \overline B_b } U(x, y)U(y, z)U(x, z) \,\mathrm d x\mathrm d y\mathrm d z + o(p^3)
\\
&\le \biggl( \int_{\overline B_b \times \overline B_b} U \biggr)^{3/2} + o(p^3)
= \big(\eta_b^{3/2}+o(1)\big) p^3\,.	
\end{align*}
Similarly, by~\eqref{eq:low-deg-sq}, \eqref{Usmall}, and the Cauchy--Schwarz inequality, we obtain, for any $p\ll b\ll 1$,
\begin{equation}
t(K_{1, 2}, U)=\int_{B_b \times \overline B_b \times \overline B_b} U(x, y)U(x, z)
\,\mathrm d x\mathrm d y\mathrm d z+o(p^2)\leq \big(\theta_b + o(1)\big)p^2\,.
\label{eq:2star}
\end{equation}
Combining the above two inequalities with \eqref{eq:triangle2}, we obtain 
\[
3\theta_b+\eta_b^{3/2}\geq \delta-o(1)\,.
\]
By Corollary~\ref{est3},
\begin{align*}
\E\left[ I_{p}(p+U)\right]
&\ge (1-o(1))\left(\theta_b+\tfrac12\eta_b\right)p^2 \log(1/p)\,
\\
&\ge (1-o(1))  \hspace{-.7em}\min_{\substack{x,y \ge 0 \\ 3x + y^{3/2}\ge \delta}} \hspace{-.7em} ( x + \tfrac12 y) \hspace{.5em}  p^2 \log (1/p)
\\
&\sim \min\bigl\{ \tfrac12 \delta^{3/2}, \tfrac13\delta \bigr\} p^2 \log (1/p)\,,
\end{align*}
since the minimum is attained at either $x=0$ or $y=0$. This together with the clique and anti-clique constructions in \S\ref{sec:clique-anticlique} (recall that $P_{K_3}(x)=3x+1$) completes the proof for the case of triangles.
\end{proof}

\subsection{The variational problem for the 4-cycle}
\label{sec:4cycle} 
The argument in~\S\ref{sec:triangles} can be applied to other graphs $H$ and rule out certain subgraphs $F$ of $H$ from having a non-negligible contribution to $t(H,W)$ in the expansion analogous to~\eqref{eq:triangle1}. However,  as we next see, already for $H = C_4$ new ideas are required to tackle all subgraphs of $C_4$ and deduce the correct lower bound on $\phi(C_4,p,\delta)$.

Let $W=p+U$ with  $t(C_4, W)\geq (1+\delta)p^4$. As earlier, assume $U$ is nonnegative and satisfies \eqref{eq:apriori}. Expanding $t(C_4, W)$ as in \eqref{eq:triangle1},
\begin{equation}
\delta p^4 \le 
t(C_4, W)-p^4 = t(C_4, U)+4p^2\,t(K_{1, 2}, U) + 4p\,t(P_4, U) + 2p^2(\E U)^2 + 4 p^3\, \E U\,,
\label{eq:4cycle1}
\end{equation}
where $P_4$ is the path on 4 vertices and we used that $\E[U] \ll p$ from~\eqref{eq:EU-upper}. By~\eqref{eq:EU-upper}, $\E[U] = o(p)$, we the final two terms on the right are negligible.

Let $B_b=\{x: d(x)> b\}$ as in~\eqref{eq:highdeg}.
By generalized H\"older's inequality, embeddings $P_4 \mapsto (w,x,y,z)\in [0,1]^4$ with $x\in\overline B_b$ satisfy
\begin{align*}
&\hspace{-2em}\int_{[0, 1]\times \overline B_b \times [0,1] \times [0,1] } U(w, x)U(x, y)U(y, z) \,\mathrm d w\mathrm d x\mathrm d y\mathrm d z 
\\
&= \int_{\overline B_b \times [0, 1] \times [0,1]} d(x)U(x, y)U(y, z)\,\mathrm d x\mathrm d y \mathrm d z 
\\
&\leq  \left(\int_{\overline B_b} d(x)^2\,\mathrm d x\right)^{1/2}\left(\int_0^1 U(x, y)^2\,\mathrm d x\mathrm d y\right) \lesssim p^3\sqrt b \ll p^3
\end{align*}
by~\eqref{eq:low-deg-sq} and \eqref{eq:EU2-upper}, and using $b = o(1)$ for the last inequality. Hence, embeddings of $P_4$ with a non-negligible contribution to $t(P_4, U)$ must place both of the interior (degree 2) vertices in $B_b$.  The contribution from such embeddings is therefore at most $\lambda(B_b)^2 \lesssim p^4/b^2 \ll p^3$, provided $b\gg \sqrt p$. Hence, $t(P_4, U)  =o(p^3)$.

We have already encountered the term $t(K_{1,2},U)$ previously when analyzing $H = K_3$. So let us focus our attention on the term $t(C_4, U)$. For convenience, write
\[
\widetilde U(w, x, y, z) := U(w, x)U(x, y)U(y, z)U(z, w).
\]
By~\eqref{eq:B_b-upper} and~\eqref{eq:EU-upper},
\begin{equation}
\int_{B_b\times B_b \times [0,1] \times [0,1]} \widetilde{U}(w, x,y,z) \,\mathrm d w\mathrm d x\mathrm d y\mathrm d z \leq \lambda(B_b)^2\E[U] \ll p^4\,.
\label{eq:1B4cycle}
\end{equation}
So any embedding placing two consecutive vertices of $C_4$ in $B_b$ is negligible.
As in \eqref{eq:theta-eta-2}, set
\[
\theta_b := p^{-2}\int_{B_b\times \overline B_b}U(x,y)^2\,\mathrm d x\mathrm d y 
\quad\text{and}\quad 
\eta_b :=p^{-2}\int_{\overline B_b\times \overline B_b}U(x,y)^2\,\mathrm d x\mathrm d y\,.
\]
The three other possible embeddings of $C_4$ (see Fig.~\ref{fig:4cycle} for an illustration) are handled via generalized H\"older's inequality as follows.
\begin{figure}
\centering
\vspace{-0.3cm}
\begin{tikzpicture}
    \node (plot1) at (0,0) {
      \includegraphics[width=.8\textwidth]{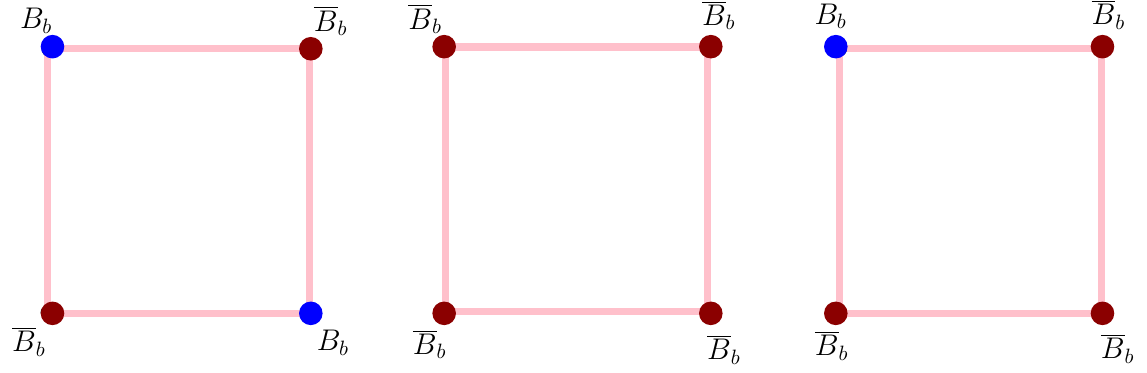}};
    \begin{scope}[shift={(plot1.south west)}]
    \node at (2.25,0) {(a)};
    \node at (7.1,0) {(b)};
    \node at (11.8,0) {(c)};
    \end{scope}
\end{tikzpicture}
\vspace{-0.6cm}
\caption{Different embeddings of the 4-cycle:  (a) and (b) are non-negligible.}
\label{fig:4cycle}
\vspace{-0.3cm}
\end{figure}
\begin{enumerate}[\indent(a)]
\item Two nonadjacent vertices in $B_b$ (2 configurations):
\begin{equation}
\int_{B_b\times\overline B_b\times B_b\times\overline B_b} \widetilde U(w, x, y, z) \, \mathrm d w\mathrm d x\mathrm d y\mathrm d z\leq
 \biggl(\int_{B_b\times\overline B_b}U^2\biggr)^2 \leq
\left(\theta_b^{2}+o(1)\right)p^4\,.
\label{eq:cycle4}
\end{equation}

\item No vertices in $B_b$ (1 configuration):
\begin{equation}
\int_{\overline B_b \times \overline B_b \times \overline B_b \times \overline B_b}  \widetilde U(w, x, y, z)
\,\mathrm d w\mathrm d x\mathrm d y\mathrm d z\leq
\biggl(\int_{\overline B_b\times\overline B_b} U^2\biggr)^2\leq
\left(\eta_b^{2}+o(1)\right) p^4\,.
\label{eq:cycle5}
\end{equation}

\item A single vertex in $B_b$ (4 configurations):
\begin{equation}
\int_{B_b\times\overline B_b \times \overline B_b \times \overline B_b }   \widetilde U(w, x, y, z)
\,\mathrm d w\mathrm d x\mathrm d y\mathrm d z
\le
\biggl(\int_{B_b\times\overline B_b} U^2\biggr)\biggl(\int_{\overline B_b\times\overline B_b} U^2\biggr)
\le
\left( \theta_b\eta_b+o(1)\right)p^4\,.
\label{eq:cycle6}
\end{equation}
\emph{(As we will see shortly, this final estimate is not tight.)}
\end{enumerate}
Combining~\eqref{eq:1B4cycle}, \eqref{eq:cycle4}--\eqref{eq:cycle6} and the estimate~\eqref{eq:2star} for $t(K_{1,2},U)$, the expansion~\eqref{eq:4cycle1} gives
\begin{equation}
2\theta_b^2+\eta_b^2+4\theta_b\eta_b+4\theta_b\geq \delta-o(1)\,,
\label{eq:4cycleholder}
\end{equation}
valid for any $\sqrt{p}\ll b\ll 1$.  As in the case of $K_3$, we wish to minimize $\theta_b+\frac12\eta_b$ subject to this constraint.
Unfortunately, the minimum of $\theta_b+\frac12\eta_b$ subject to~\eqref{eq:4cycleholder} is not attained at  $\theta_b=0$ or $\eta_b=0$. Thus the lower bound obtained in this way does not match the upper bound from Proposition~\ref{ppn:construction}.

Recall the  clique and anti-clique graphons in \S\ref{sec:graphon}. The main contribution of the anti-clique to $t(C_4,U)$ is through embeddings of type~(a), whereas for the clique it is through embeddings of type~(b). For the correct lower bound, we must show that the contribution from embeddings of type~(c) is negligible; however, this can no longer be achieved using any arbitrary $\sqrt{p}\ll b \ll 1$. To conclude the proof, we select the degree threshold $b$ \emph{adaptively} based on the graphon $U$.

\begin{lem}[Adaptive degree threshold for $C_4$] 
Assume that $U$ satisfies \eqref{eq:apriori}. There exists $b$ (possibly depending on $U$) with $\sqrt{p} \ll b \ll 1$ such that
\begin{equation}
\int_{B_b\times \overline B_b\times \overline B_b\times \overline B_b } \widetilde{U}(w,x,y,z)\,\mathrm d w \mathrm d x \mathrm d y \mathrm d z = o(p^4)\,.
\label{eq:adapC4}
\end{equation}
\label{lem:adaptive_C4}
\end{lem}
\begin{proof}
It suffices to show that for every constant $\varepsilon > 0$, we can find $b = b(U, p, \varepsilon)$ with $\sqrt{p} \ll b \ll 1$ such that 
\[
\int_{B_b\times \overline B_b\times \overline B_b\times \overline B_b } \widetilde{U}(w,x,y,z)\,\mathrm d w \mathrm d x \mathrm d y \mathrm d z \le \varepsilon p^4\,
\]
provided that $p$ is small enough. By generalized H\"older's inequality, $t(C_4, U) \le \E[U^2]^2 \lesssim p^4$ by \eqref{eq:EU2-upper}. So we can fix some constant $C >0$ such that $t(C_4, U) \le C p^4$. Set $ M = \lceil 2C/\varepsilon\rceil$.
Further let $\sqrt{p}\ll b_1 \ll b_2 \ll \cdots \ll b_M\ll 1$. As usual, set $B_{b_i}=\{x: d(x)> b_i\}$, and note that 
$B_{b_M}\subseteq \ldots \subseteq B_{b_1}$.
For every $2\leq i \leq M$, using~\eqref{eq:B_b-upper} and~\eqref{eq:low-deg-sq} we find that
\begin{align}
\int_{B_{b_i}\times \overline B_{b_i} \times \overline B_{b_{i-1}} \times \overline B_{b_i}}   \widetilde U(w, x, y, z)
\,\mathrm d w\mathrm d x\mathrm d y\mathrm d z & \leq \lambda(B_{b_i}) \int_{\overline B_{b_{i-1}}}\hspace{-.5em} d(x)^2 \,\mathrm d x
\lesssim \frac{b_{i-1}}{b_i} p^4 \ll p^4\,.
\label{eq:adaptive1}
\end{align}
Thus, setting $b = b_i$ works provided that
\begin{equation}
\int_{B_{b_i}\times \overline B_{b_i} \times (B_{b_{i-1}}\setminus B_{b_i})\times \overline B_{b_i}}   \widetilde U(w, x, y, z) \,\mathrm d w\mathrm d x\mathrm d y\mathrm d z \leq \tfrac12 \varepsilon p^4\,.
\label{eq:adaptive2}
\end{equation}
We finish the proof by observing that there is necessarily some $2\leq i \leq M$ satisfying~\eqref{eq:adaptive2}, as otherwise---since the sets $\{B_{b_{i}} \times \overline B_{b_i}\times  (B_{b_{i-1}} \setminus  B_{b_{i}}) \times \overline B_{b_i}: 2\leq i \leq M\}$ are mutually disjoint---we would get $t(C_4,U) > (M \varepsilon /2)p^4 \geq C p^4$, a contradiction to our choice of $C$.
\end{proof}
\begin{remark}
Note the advantage of using multiple thresholds with $b_{i-1}\ll b_i$ in the proof of Lemma~\ref{lem:adaptive_C4}: a single threshold function $b_{i}\equiv b$ (as in~\S\ref{sec:triangles}) would have given a bound of $O(p^4)$ for the left-hand of~\eqref{eq:adaptive1} vs.\ the sought $o(p^4)$. This idea will be crucial in our arguments for general graphs.
\end{remark}

Combining the above estimates, the expansion~\eqref{eq:4cycle1} implies (this is \eqref{eq:4cycleholder} with the extraneous $4\theta_b\eta_b$ term deleted)
\[ 
2\theta_b^2 + 4\theta_b + \eta_b^2 \geq \delta -o(1)\,.
\] 
By Corollary~\ref{est3},
\begin{align*}
\E\left[ I_{p}(p+U)\right]
&\ge (1-o(1))\left(\theta_b+\tfrac12\eta_b\right)p^2 \log(1/p)\,
\\
&\ge (1-o(1))  \hspace{-.7em}\min_{\substack{x,y \ge 0 \\ 2x^2 + 4x + y^2 \ge \delta}} \hspace{-.7em} ( x + \tfrac12 y) \hspace{.5em}  p^2 \log (1/p)\,.
\end{align*}
This minimum is attained at either $x=0$ or $y=0$ by the following lemma, thereby giving the bound for $\phi(C_4,p,\delta)$ matching the one from Proposition~\ref{ppn:construction} (recall that $P_{C_4}(x)=2x^2 + 4x + 1$).

\begin{lem}\label{lem:convarg} Let $f,g$ be convex  nondecreasing functions on $[0,\infty)$ and let $a>0$. The minimum of $x+y$ over the region
$\{x,y\geq0:f(x)+g(y)\ge a\}$ is attained at either $x=0$ or $y=0$.
\end{lem}

\begin{proof}
By convexity, if $\gamma=\frac{y}{x+y}$ then $f(x) \leq \gamma f(0)+ (1-\gamma)f(x+y)$ and $g(y) \leq (1-\gamma)g(0)+\gamma g(x+y)$, so
\begin{align*}
f(x)+g(y) 
&\le \gamma[f(0)+g(x+y)] + (1-\gamma)[f(x+y)+g(0)] 
\\
&\le \max\{f(0)+g(x+y),\, f(x+y)+g(0)\}\,.
\end{align*}
This shows that for a fixed value of $x+y$, $f(x) + g(y)$ is maximized at $x= 0$ or $y=0$. The claim then follows.
\end{proof}

\section{General graphs} \label{sec:general}

We begin the analysis for a general graph $H$. As always, $\Delta \ge 2$ denotes the maximum degree of $H$.

\subsection{Decomposition} \label{sec:decomp}
We can expand $t(H,W) = t(H,p+U)$ as
\begin{equation}
	\label{eq:H-expand}
t(H, W)-p^{|E(H)|} = \sum_{F} N(F,H) t(F, U)p^{|E(H)|-|E(F)|}\,,
\end{equation}
where the sum is taken over non-empty subgraphs $F$ of $H$ (up to isomorphism) and $N(F,H)$ is the number of subgraphs of $H$ isomorphic to $F$. Assuming $U$ satisfies \eqref{eq:apriori} (in particular the consequence \eqref{eq:EU2-upper} $\E[U^2] \lesssim p^\Delta$), every term on the right-hand side of \eqref{eq:H-expand} is of order $O(p^{|E(H)|})$, since by generalized H\"older's inequality, 
\begin{equation}
	\label{eq:F-density-order}
	t(F, U) 
	\le (\E[U^{\Delta}])^{|E(F)|/\Delta} 
	\le (\E[U^2])^{|E(F)|/\Delta} 
	\lesssim p^{|E(F)|}\,.
\end{equation}
However this bound is often not tight, as many contributions are negligible in that $t(F, U) = o(p^{|E(F)|})$, as we saw earlier in the case $H= K_3$ and $K_4$. We proceed by identifying and bounding the non-negligible terms.

\subsection{Negligible terms} \label{sec:negligible-terms}

Let $\tau(F)$ denote the minimum size of a vertex cover of $F$, where a \emph{vertex cover} of $F$ is a subset of vertices that intersects every edge of $F$.

\begin{lem}
	\label{lem:F-negligible}
	Let $\Delta \ge 2$ and $U$ be a graphon satisfying \eqref{eq:apriori}, i.e., $\E[I_p(p+U)] \lesssim p^\Delta I_p(1)$. Let $F$ be a connected graph with maximum degree at most $\Delta$. If $\tau(F) > |E(F)|/\Delta$ and $F$ is not $\Delta$-regular, there is some constant $\kappa = \kappa(F) > 0$ such that $t(F, U) \lesssim p^{|E(F)|+\kappa} = o(p^{|E(F)|})$.
\end{lem}

We will prove this lemma shortly. Note that every subgraph $F$ of $H$ satisfies $\tau(F) \ge |E(F)|/\Delta$. Due to the above lemma, the set of subgraphs
\begin{equation}\label{eq:sF-def}
\sF_H :=\left\{F : F \text{ is a non-empty subgraph of } H \text{ with } \tau(F) = |E(F)|/\Delta \right\}
\end{equation}
plays an important role.
Let us highlight some basic properties of $\sF_H$ for a connected graph $H$, all of which are easy to prove.
\begin{itemize}
	\item Every $F \in \sF_H$ is bipartite and has maximum degree exactly $\Delta$.
	\item If $S$ is a minimum size vertex cover of $F \in \sF_H$, then $S$ is an independent set and every vertex of $S$ has degree $\Delta$ in $F$; furthermore, $(S, V(F)\setminus S)$ forms a vertex bipartition of $F$. Conversely, any non-empty independent set $S$ of $H^*$ (i.e., an independent set of $H$ consisting of degree $\Delta$ vertices in $H$) gives rise to an $F \in \sF_H$ with $S$ as a minimum vertex cover by forming $F$ using the edges of $H$ incident to $S$.
	\item Every $F \in \sF_H$ has a unique minimum vertex cover except when $H$ is a regular bipartite graph, in which case $H \in \sF_H$ has two different minimum vertex covers
	(corresponding to two sides of the vertex partition; here we use that $H$ is connected).
	\item For a regular graph $H$, we have $H \in \sF_H$ if and only if $H$ is bipartite.
\end{itemize}

\begin{cor} \label{cor:reduced-expand}
	Let $H$ be a connected graph with maximum degree $\Delta \ge 2$, and $F$ a non-empty subgraph of $H$. Let $U$ be a graphon satisfying \eqref{eq:apriori}. Then $t(F, U) = o(p^{|E(F)|})$ unless $F \in \sF_H$ or $F$ is $\Delta$-regular (in the latter case necessarily $F = H$). Consequently, 
	\begin{equation} \label{eq:H-expand-reduced}
		t(H, W)-p^{|E(H)|}=\sum_{F \in \sF_H \cup \{H\}}N(F, H)t(F, U)p^{|E(H)|-|E(F)|}+o(p^{|E(H)|})\,.
	\end{equation}
	If $H$ is irregular, one can replace the ``$\sF_H \cup \{H\}$'' in the summation by simply ``$\sF_H$''.
\end{cor}

\begin{proof}
	Suppose $\tau(F) > |E(F)|/\Delta$ and $F$ is not $\Delta$-regular. Let $F_1, \dots, F_k$ be the connected components of $F$. Since $\tau(F) = \tau(F_1) + \dots + \tau(F_k)$, we see that some $F_i$, say $F_1$, satisfies $\tau(F_1) > |E(F_1)|/\Delta$. Furthermore this $F_1$ is not $\Delta$-regular, since $H$ has no $\Delta$-regular subgraphs other than possibly itself due to its connectedness, and we have ruled out the possibility of $F = H$ being $\Delta$-regular in the hypothesis. Therefore $t(F_1, U) = o(p^{|F_1|})$ by Lemma~\ref{lem:F-negligible} and $t(F_i, U) = O(p^{|F_i|})$ for $i \ge 2$ by \eqref{eq:F-density-order}. Therefore $t(F, U) = t(F_1, U) \dotsm t(F_k, U) = o(p^{|E(F)|})$ as claimed. The claim \eqref{eq:H-expand-reduced} is then deduced from \eqref{eq:H-expand}. 
\end{proof}

From the correspondence between $\sF_H$ and independent sets of $H^*$, we obtain
\begin{equation}
\label{eq:indpoly-N(F,H)}
P_{H^*}(x) = 1 + \sum_{F\in\sF_H} N(F,H) x^{|E(F)|/\Delta} + {\bf 1}\{H \text{ is regular and bipartite}\} x^{|E(H)|/\Delta}\,.
\end{equation}
In~\S\ref{sec:irregular} and \S\ref{sec:regular} we will relate each term $t(F,U)p^{|E(H)|-|E(F)|}$ in the right-hand of~\eqref{eq:H-expand-reduced} to $\theta^{|E(F)|/\Delta}  p^{|E(H)|}$ where $\theta$ is defined analogously to~\eqref{eq:theta-eta-2}.

\medskip

We will prove Lemma~\ref{lem:F-negligible} by using generalized H\"older's inequality with non-uniform weights. The \emph{fractional matching number}, denoted $\nu^*(F)$, is the maximum value of $w(E(F)) := \sum_{e\in E(F)} w(e)$ over all weight functions $w:E(F)\to[0,1]$ such that $\sum_{e \ni v} w(e)\leq 1$ for every vertex $v\in F$, i.e., $\nu^*(F)$ is the linear relaxation of the matching number $\nu(F)$, which corresponds to restricting $w(e)$ to $\{0,1\}$-values. Recall that the \emph{matching number} $\nu(F)$ is the size of the largest matching of $F$, where a \emph{matching} is a subset of edges with no two edges sharing a vertex.

One has $\nu(H) \le \tau(H)$ for every graph $H$. K\"onig's theorem tells us that this is always an equality when $H$ is bipartite.

\begin{namedthm}{K\"onig's theorem}
For every bipartite graph $H$, $\nu(H) = \tau(H)$.	
\end{namedthm}

\begin{lem} \label{lem:irreg-frac-matching}
Let $\Delta \ge 2$ and $F$ a connected graph with maximum degree at most $\Delta$ and not $\Delta$-regular. If $\tau(F) > |E(F)|/\Delta$, then
\[
\nu^*(F) > |E(F)|/\Delta\,.
\]
\end{lem}
\begin{proof}
We have $\nu(F) \le \nu^*(F) \le \tau(F)$ by linear programming duality. If $F$ is bipartite, then $\nu(F) = \tau(F)$ by K\"onig's theorem, so $\nu^*(F) = \tau(F) > |E(F)|/\Delta$ by hypothesis.

Now assume that $F$ is not bipartite, and let $C$ be an odd cycle in $F$. Let $u_0 \in V(F)$ with $\deg_F(u_0) < \Delta$. Let $P$ be a shortest path in $F$ from $u_0$ to $C$ (using that $F$ is connected) and let the vertices of $P$ be $u_0,u_1,\ldots,u_r$. Write the vertices of $C$ as $v_0,\ldots,v_{2k}$ with $v_0 = u_r$. We now perturb the constant weights $w\equiv 1/\Delta$ into new weights $w'$ by, for each $0 \le j \le r-1$, adding $(-1)^j \varepsilon$ to the weight of edge $(u_j,u_{j+1})$ along the path, and, for each $0 \le j \le 2k$, adding $(-1)^{j+r} \varepsilon/2$ to the weight of edge $(v_j,v_{j+1})$ along the cycle (index taken modulo $2k+1$).
Since $C$ is an odd cycle, $\sum_{e \ni v} w'(e) = \sum_{e \ni v}w(e)$ for all $v\neq u_0$. Thus, $w'$ is admissible as long as $\varepsilon$ is small enough.
Finally, if $r$ is even then $w'(P)=w(P)$ and $w'(C)=w(C)+\varepsilon/2$, and if $r$ is odd then $w'(P)=w(P)+\varepsilon$ and $w'(C)=w(C)-\varepsilon/2$. Either way, $\nu^*(F)\geq w'(E(F)) = E(F)/\Delta+\varepsilon/2$.
\end{proof}

\begin{lem} \label{lem:nonunif-holder}
Let $\Delta \ge 2$ and $U$ a graphon satisfying \eqref{eq:apriori}. Let $F$ be a graph with maximum degree at most $\Delta$. If $\nu^*(F) > |E(F)|/\Delta$, then there is some constant $\kappa = \kappa(F) > 0$ such that $t(F, U) \lesssim p^{|E(F)|+\kappa}$.
\end{lem}

\begin{proof}
Since $\nu^*(F) > |E(F)|/\Delta$, the function $w \equiv 1/\Delta$ is not a local maximum of the linear program defining $\nu^*(F)$.
If $\Delta=2$, one can perturb $w \equiv 1/2$ into $w'\colon E(F)\rightarrow [0, 2/3]$, such that $\sum_{v\ni e}w'(e) \leq 1$ and  $\sum_{e\in E(F)}w'(e) \ge (|E(F)|+\kappa) /2$ for some constant $\kappa>0$.  By generalized H\"older's inequality with these weights,
\begin{multline*}
t(F, U)
\le \prod_{e\in E(F)} \E[U^{1/w'(e)}]^{w'(e)} 
\le \E [U^{3/2}]^{(|E(F)|+\kappa)/2}
\\ 
\stackrel{\eqref{eq-x32-bound}}{\leq} \bigl(\E[I_p(p+U)]/I_p(1) + o(p^2)\bigr)^{(|E(F)|+\kappa)/2}
\stackrel{\eqref{eq:apriori}}{\lesssim} p^{|E(F)|+\kappa}\,.
\end{multline*}
If $\Delta\geq 3$, one can perturb $w\equiv 1/\Delta$ into $w'\colon E(F)\rightarrow [0, 1/2]$ such that $\sum_{v\ni e}w'(e) \leq 1$ and $\sum_{e\in E(F)}w'(e)\ge (|E(F)|+\kappa)/\Delta$ for some constant $\kappa >0$. By generalized H\"older's inequality with these weights,
\[
t(F, U)
\le \prod_{e\in E(F)} \E[ U^{1/w'(e)}]^{w'(e)}
\le \E[U^2]^{(|E(F)|+\kappa)/\Delta} 
\stackrel{\eqref{eq:EU2-upper}}{\lesssim} p^{|E(F)|+\kappa}\,. \qedhere
\]
\end{proof}

Lemma~\ref{lem:F-negligible} follows immediately by combining Lemmas~\ref{lem:irreg-frac-matching} and \ref{lem:nonunif-holder}.

\subsection{Main terms} \label{sec:main-terms}

We now state upper bounds to the non-negligible terms $t(F,U)$ in \eqref{eq:H-expand-reduced}, namely for $F \in \sF_H$, as well as $F = H$ in the case when $H$ is regular.

Recall from \eqref{eq:highdeg} that we denote the set of high degree vertices in $U$ by
\[
B_b =\{x:d(x)\ge b\}\,,\quad\text{ where }\quad d(x) =\int_0^1 U(x, y) \,\mathrm d y\,.
\]
Define (compared to \eqref{eq:theta-eta-2} we have $p^{-\Delta}$ here instead of $p^{-2}$)
\begin{equation}
\theta_b := p^{-\Delta}\int_{B_b\times \overline B_b}U(x,y)^2\mathrm d x\mathrm d y 
\quad\text{and}\quad 
\eta_b :=p^{-\Delta}\int_{\overline B_b\times \overline B_b}U(x,y)^2\mathrm d x\mathrm d y\,.
\label{eq:theta_eta-general}
\end{equation}

For any graph $F$, we write
\[
W(\vec x|F) := \prod_{(i, j)\in E(F)}W(x_i, x_j)\,,
\]
where $\vec x = (x_v)_{v \in V(F)}$ is clear from context. If the domain of an integral is omitted, then it is assumed to be $[0,1]$ for every $x_v$.

\begin{ppn} \label{ppn:main-term}
Let $\Delta \ge 2$ and $U$ be a graphon satisfying \eqref{eq:apriori}. 
\begin{enumerate}
	\item[(a)] Let $F$ be a connected irregular bipartite graph with maximum degree $\Delta$ and $\tau(F) = |E(F)|/\Delta$. Let $A$ be the unique vertex cover of $F$ with size $|E(F)|/\Delta$. Then, for any $p^{1/3} \ll b \ll 1$,
	\begin{align*}
	t(F,U) 
	&= \int U(\vec x | F) \mathbf 1 
	\left\{
	\begin{array}{ll}
	\forall v \in A \colon x_v \in B_b \\
	\forall u \notin A \colon x_u \in \overline B_b
	\end{array}
	\,
	\right\} \mathrm d \vec x
	+ o(p^{|E(F)|})
	\\
	& \le \theta_b^{|E(F)|/\Delta} p^{|E(F)|} + o(p^{|E(F)|}) \,.
	\end{align*}
	
	\item[(b)] Let $H$ be a connected $\Delta$-regular non-bipartite graph. Then there exists some constant $\kappa = \kappa(H) > 0$ such that for any $p^\kappa \le b \ll 1$ one has
	\begin{align*}
	t(H,U) 
	&= \int U(\vec x | H) \mathbf 1 
	\left\{
	\forall v \in V(H) \colon x_v \in \overline B_b
	\right\} \mathrm d \vec x
	+ o(p^{|E(H)|})
	\\
	& \le \eta_b^{|E(H)|/\Delta} p^{|E(H)|} + o(p^{|E(H)|}) \,.
	\end{align*}
	
	\item[(c)] Let $H$ be a connected $\Delta$-regular bipartite graph with vertex bipartition $(A, V(H)\setminus A)$. For any $b_0 = o(1)$, there exists some $b$ with $b_0 \le b \ll 1$ such that
	\[
	t(H, U) = \Gamma_1 + \Gamma_2 + \Gamma_3 + o(p^{|E(H)|})
	\]
	where
	\begin{align*}
	\Gamma_1 
	& = 
	\int U(\vec x | H)	
	\left\{
	\begin{array}{ll}
	\forall v \in A \colon x_v \in B_b \\
	\forall u \notin A \colon x_u \in \overline B_b
	\end{array}
	\,
	\right\} \mathrm d \vec x
	+ o(p^{|E(H)|})
	\\
	&\le \theta_b^{|E(H)|/\Delta} p^{|E(H)|} + o(p^{|E(H)|}) \,,
	\\
	\Gamma_2 
	&= 
	\int U(\vec x | H)	
	\left\{
	\begin{array}{ll}
	\forall v \in A \colon x_v \in \overline B_b \\
	\forall u \notin A \colon x_u \in  B_b
	\end{array}
	\,
	\right\} \mathrm d \vec x
	+ o(p^{|E(H)|})
	\\
	&\le \theta_b^{|E(H)|/\Delta} p^{|E(H)|} + o(p^{|E(H)|}) \,,
	\\
	\Gamma_3
	& = 
	\int U(\vec x | H)	
	\left\{ \forall v \in V(H) \colon x_v \in \overline B_b	\right\} \mathrm d \vec x
	+ o(p^{|E(H)|})
	\\
	&\le \eta_b^{|E(H)|/\Delta} p^{|E(H)|} + o(p^{|E(H)|}) \,.
	\end{align*}
\end{enumerate}
\end{ppn}

Note that each ``$\le$'' in the statement of the proposition above follows from the generalized H\"older's inequality. For example, in (a), one bounds the integral from above by
\[
\left(\int_{B_b \times \overline B_b} U^{\Delta}\right)^{|E(F)|/\Delta}
\le 
\left(\int_{B_b \times \overline B_b} U^{2}\right)^{|E(F)|/\Delta}
= \theta_b^{|E(F)|/\Delta} p^{|E(F)|}\,.
\]

We will prove part (a) of Proposition~\ref{ppn:main-term} in \S\ref{sec:irregular} and parts (b) and (c) in \S\ref{sec:regular}. Now we use the proposition to deduce the main result about the asymptotic solutions to the variational problem.

\begin{proof}[Proof of Theorem~\ref{thm:main-graphon} assuming Proposition~\ref{ppn:main-term}] The upper bound to $\phi(H,p,\delta)$ has been handled by the clique and anti-clique constructions. It remains to prove the lower bounds.

First we consider the case with $H$ being irregular, which only requires part (a) of Proposition~\ref{ppn:main-term}.
Set $b = p^{1/4}$. By applying Proposition~\ref{ppn:main-term}(a) to connected components of $F \in \sF_H$ (and noting \eqref{eq:F-density-order}), we have, by~\eqref{eq:H-expand-reduced},
\begin{align}
t(H, W)-p^{|E(H)|}
&= \sum_{F \in \sF_H} N(F, H)t(F, U)p^{|E(H)|-|E(F)|}+o(p^{|E(H)|})\,. \nonumber
\\
&\le p^{|E(H)|} \sum_{F\in \sF_H}N(F, H)\theta_b^{|E(F)|/\Delta} p^{|E(H)|} = p^{|E(H)|} \left(P_{H^*}(\theta_b)-1\right)\,.
\label{eq:rr2}
\end{align}
So $t(H, W) \ge (1+\delta)p^{|E(H)|}$ implies $P_{H^*}(\theta_b)\ge 1+\delta-o(1)$. Recall that $\theta$ satisfies $P_{H^*}(\theta) = 1+\delta$. So $\theta_b \ge \theta - o(1)$. By Corollary~\ref{est3},
\[
\E[I_{p}(p+U)] 
\ge (\theta_b-o(1))p^\Delta \log(1/p)
\ge (\theta - o(1))p^\Delta \log(1/p)\,.
\]
This completes the proof in the case of irregular $H$.

\medskip

Now assume that $H$ is regular.
Proposition~\ref{ppn:main-term} implies that there is some $b= o(1)$ such that 
\begin{equation}
	\label{eq:H-reg-t-upper}
t(H, W) \le p^{|E(H)|} (P_{H^*}(\theta_b) + \eta_b^{|E(H)|/\Delta} + o(1)).
\end{equation}
Indeed, we bound the terms in the expansion \eqref{eq:H-expand-reduced} of $t(H, W) = t(H,p+U)$ by applying Proposition~\ref{ppn:main-term} and then match them to the terms of $P_{H^*}$ in \eqref{eq:indpoly-N(F,H)}. As earlier, Proposition~\ref{ppn:main-term} is applied to the connected components of $F$; recall that since $H$ is connected, no subgraph other than itself can be $\Delta$-regular. It is also worth noting that if $H$ is bipartite and regular then $H \in \sF_H$ contributes twice in both \eqref{eq:H-expand-reduced} and \eqref{eq:indpoly-N(F,H)}.

So $t(H, W) \ge (1+\delta)p^{|E(F)|}$ implies $P_{H^*}(\theta_b) + \eta_b^{|E(H)/\Delta} \ge 1 + \delta - o(1)$, which, by Lemma~\ref{lem:convarg}, implies that $\theta_b + \tfrac12 \eta_b \ge \min\{\theta, \tfrac12 \delta^{2/|V(H)|}\} - o(1)$ as $P_{H^*}(\theta) = 1+\delta$. Thus, by Corollary~\ref{est3},
\begin{align*}
\E[I_{p}(p+U)] &\ge (\theta_b + \tfrac12 \eta_b-o(1))p^\Delta \log(1/p)
\\
&\ge (\min\{\theta, \tfrac12 \delta^{2/|V(H)|}\} - o(1))p^\Delta \log(1/p)\,.\end{align*}
Thereby completing the proof in the case of regular $H$.
\end{proof}

It remains to prove Proposition~\ref{ppn:main-term}, which will be done in the next two sections. Roughly, the idea is to eliminate negligible contributions from $t(F,U)$ (or $t(H,U)$ in parts (b) and (c)) by taking a certain subgraph $M$ of $F$ with maximum degree 2 (i.e., a disjoint union of cycles and paths), so that $U(\vec x | F) \le U(\vec x | M)$. This reduces the problem to paths and cycles. We then extend the analysis of the triangle and the 4-cycle in \S\ref{sec:triangle-4cycle} to handle these cases.

\section{Bounding contributions from irregular components}
\label{sec:irregular}

In this section we prove Proposition~\ref{ppn:main-term}(a). We first need a preparatory lemma.

\subsection{Existence of a 2-matching}

We say that a subset $M$ of edges of $F$ is a \emph{2-matching} if $M$ is a union of two matchings of $F$ (equivalently $M$ is a disjoint union of paths and even cycles).\footnote{This differs slightly from the notion of 2-matchings in the literature (cf.~\cite{LP86}), the definition here being a special case.} The following lemma is the main result of this section, and it will be used for proving Proposition~\ref{ppn:main-term}(a). The reader may wish to skip its proof on the first reading.

\begin{lem} \label{lem:2-matching}
	Let $F$ be a connected irregular bipartite graph with maximum degree $\Delta$ and $\tau(F) = |E(F)|/\Delta$. For every vertex $v$ of $F$, there is a 2-matching $M$ of $F$ of size $2|E(F)|/\Delta$ such that the connected component of $v$ in $M$ is a path.
\end{lem}

Recall that a \emph{proper edge-coloring} is a coloring of edges so that edges that share a common vertex receive different colors.
The following result is classic. See \cite[Theorem~1.4.18]{LP86} for a proof via embedding the graph in a larger $\Delta$-regular graph.

\begin{namedthm}{K\"onig's edge-coloring theorem}
	Every bipartite graph of maximum degree $\Delta$ has a proper edge-coloring with $\Delta$ colors.
\end{namedthm}

\begin{cor} \label{cor:matching-max-deg-verts}
	Every bipartite graph has a maximum matching which covers all maximum degree vertices.
\end{cor}

\begin{proof}
	Let $G$ be the bipartite graph and $\Delta$ its maximum degree. By K\"onig's edge-coloring theorem, $G$ has a proper edge-coloring of the graph with $\Delta$ colors. Every degree $\Delta$ vertex is incident to all $\Delta$ colors. Let $M$ denote the edges of an arbitrary color class. Then $M$ is a matching that covers all degree $\Delta$ vertices. If $M$ is not a maximum matching, then we can repeatedly replace $M$ by a larger matching via ``augmenting paths'' \cite[Theorem~1.2.1]{LP86}, while maintaining the property that all degree $\Delta$ vertices are covered by $M$. The process terminates with a maximum matching $M$ that covers all degree $\Delta$ vertices.
\end{proof}

Returning to Lemma~\ref{lem:2-matching}, note that if we apply K\"onig's edge-coloring theorem to $F$ and take $M$ to be the union of two arbitrary color classes, then $M$ is a 2-matching of $F$ of size $2|E(F)|/\Delta$. It remains to modify $M$ so that the connected component of $v$ in $M$ is a path.

\begin{proof}[Proof of Lemma~\ref{lem:2-matching}]
The result is easy when $\Delta = 2$, in which case we can take $M = F$. So assume $\Delta \ge 3$ from now on. 

Let $(A,B)$ be a vertex bipartition of $F$. Due to $F$ being connected and $\tau(F) = |E(F)|/\Delta$, either $A$ or $B$ must be unique minimum vertex cover of $F$. Relabeling if necessary, assume that $A$ is the minimum vertex cover. So $\tau(F) = |E(F)|/\Delta = |A| < |B|$ as $F$ is irregular, and $\deg_F(v) = \Delta$ for every $v \in A$.

First consider the case $v \in B$. We will show that in fact we can have $\deg_M(v) = 1$. If $\deg_F(v) < \Delta$, then by K\"onig's edge-coloring theorem, the edges of $F$ can be partitioned into $\Delta$ matchings of size $|A|$. We obtain the desired $M$ by taking two such matchings, with one matching covering $v$ and the other matching not covering $v$ (we can do this since $1 \le \deg_F(v) < \Delta$).

Now suppose $v \in B$ and $\deg_F(v) = \Delta$. Since $F$ is connected and irregular, and $\deg_F(a) = \Delta$ for every $a\in A$, we see that every $S \subseteq A$ has at least $|S| + 1$ neighbors in $B$ (otherwise we would have a $\Delta$-regular component). Let $F - v$ denote $F$ with $v$ removed (along with edges incident to $v$). Apply Hall's matching theorem to $F-v$ and we find that the maximum matching of $F-v$ has size $|A|$. Then Corollary~\ref{cor:matching-max-deg-verts} applied to $F-v$ gives a matching $M_1$ of size $|A|$ in $F$ not covering $v$ but covering every other vertex of degree $\Delta$ in $B$. Let $F'$ denote $F$ with $M_1$ removed. We claim that $F'$ has a matching of size $|A|$, since otherwise K\"onig's theorem would imply that $F'$ has a vertex cover of size $|A| - 1$, which is impossible since $F'$ has $|A|(\Delta-1)$ edges, and all its vertices have degree at most $\Delta - 1$ with the exception of $v$, which has degree $\Delta$. Thus, by Corollary~\ref{cor:matching-max-deg-verts} again, we can find a matching $M_2$ in $F$ of size $|A|$ that covers $v$. Taking $M = M_1 \cup M_2$ works.

Finally, suppose $v \in A$. Let $u \in B$ be an arbitrary neighbor of $u$. By above, there is a 2-matching $M$ of size $2|A|$ such that $\deg_M(u) = 1$. If $(u,v) \in M$, then we are done. Otherwise, let $w$ be an arbitrary neighbor of $v$ in $M$, and modify $M$ by removing $(v,w)$ and adding $(u,v)$. Then $v$ lies on a path component in the modified $M$.
\end{proof}

\subsection{Proof of Proposition~\ref{ppn:main-term}(a)} The claim follows from the next two lemmas.

\begin{lem} \label{lem:adj-neg}
Let $\Delta \ge 2$ and $U$ be a graphon satisfying \eqref{eq:apriori}.
Let $F$ be a bipartite graph with maximum degree $\Delta$ and $\tau(F) = |E(F)|/\Delta$. Let $A$ be its unique vertex cover of size $|E(F)|/\Delta$. As long as $b \gg p^{1/3}$,
\[
\int U(\vec x|F) {\mathbf{1}}\{ \exists (i,j)\in E(F)\colon x_i, x_j \in B_b\}\mathrm d \vec x = o(p^{|E(F)|})\,.
\]
\end{lem}

\begin{proof}
Fix an edge $(i, j)\in E(F)$. It suffices to prove that 
\[
\int U(\vec x|F) {\mathbf{1}}\{x_i, x_j \in B_b\}\mathrm d \vec x = o(p^{|E(F)|})\,.
\] 
By K\"onig's edge-coloring theorem, there is a proper edge-coloring of $F$ with $\Delta$ colors, so that each color class is a matching of size exactly $|E(F)|/\Delta$ (since every vertex in $A$ must see all edge-colors). Thus there exists a 2-matching $M$ of $F$ of size $2|E(F)|/\Delta$ such that $(i, j)\in E(M)$. Let $M'$ be obtained from $M$ by removing the edges incident to either $i$ or $j$. So $2 \le |M \setminus M'| \le 3$ (since one of $i$ and $j$ has degree $\Delta$ in $H$, and hence degree $2$ in $M$). Hence
\begin{align*}
\int U(\vec x|M) {\mathbf{1}}\{x_i, x_j \in B_b\}\,\mathrm d \vec x
&\le \lambda(B_b)^2\int U(\vec x|M')\,\mathrm d \vec x
\le \lambda(B_b)^2 \ \E [U^2]^{|E(M')|/2}
\\
&\lesssim \left(\frac{p^\Delta}{b}\right)^2 p^{\frac{\Delta(|E(M)|-3)}{2}}\lesssim b^{-2}p^{|E(F)|+\frac{\Delta}{2}} \ll p^{|E(F)|}\,.
\end{align*}
The second inequality is by generalized H\"older's inequality. The next is due to \eqref{eq:B_b-upper} and \eqref{eq:EU2-upper}. Finally, since $U(\vec x|F) \leq  U(\vec x|M)$, the claim follows.
\end{proof}

\begin{lem}\label{lem:irreg-neg-A-low}
Let $\Delta \ge 2$ and $U$ be a graphon satisfying \eqref{eq:apriori}. 
Let $F$ be a connected irregular bipartite graph with maximum degree $\Delta$ and $\tau(F) = |E(F)|/\Delta$. Let $A$ be its unique vertex cover of size $|E(F)|/\Delta$. Then, as long as $b = o(1)$, one has
\[
\int U(\vec x|F){\mathbf{1}}\{\exists v \in A\colon x_v \in \overline B_b\}\mathrm d\vec x = o(p^{|E(F)|})\,.
\]
\end{lem}

Fix $v \in A$. It suffices to show that
\[
\int U(\vec x|F){\mathbf{1}}\{x_v \in \overline B_b\} \, \mathrm d\vec x = o(p^{|E(F)|})\,.
\]
Let $M$ be a 2-matching of $F$ of size $2|A|$ such that the connected component of $v$ in $M$ is a path. The existence of this 2-matching is guaranteed by Lemma~\ref{lem:2-matching}. 
Let $M_1, M_2, \ldots, M_q$ be the connected components of $M$, labeled so that $M_1$ is the connected component of $v$ in $M$. In particular, $M_1$ is a path. Note that  $U(\vec x | F) \le U(\vec x | M)$ and
\begin{equation} \label{eq:2-component-holder}
	t(M_i, U) \le \E[U^2]^{|E(M_i)|/2} \lesssim p^{\Delta|E(M_i)|/2}\, \text{ for each } 1 \le i \le q
\end{equation}
by generalized H\"older's inequality and \eqref{eq:EU2-upper}. Lemma~\ref{lem:irreg-neg-A-low} is reduced to proving
\[
\int U(\vec x|M_1){\mathbf{1}}\{x_v \in \overline B_b\} \, \mathrm d\vec x = o(p^{\Delta|E(M_1)|/2})\,,
\]
which follows from the next lemma.

\begin{lem} 
Let $\Delta \ge 2$ and $\ell$ be positive integers, and $U$ a graphon satisfying \eqref{eq:apriori}. Let $P$ denote a path on $2\ell+1$ vertices labeled $1, 2, \dots, 2\ell+1$. Then, for every $1 \le k \le \ell$, as long as $b = o(1)$,
\begin{equation}
	\label{eq:path-neg}
\int U(\vec x|P) \mathbf 1 \{x_{2k} \in \overline B_b\} = o(p^{\Delta \ell}).
\end{equation}
\end{lem}

\begin{proof}
Since the left-hand side of \eqref{eq:path-neg} cannot decrease as $b$ gets larger, we may assume that $b \gg p$.

We use induction on $k$. For $k = 1$, we have, by 
generalized H\"older's inequality followed by \eqref{eq:EU2-upper} and \eqref{eq:low-deg-sq},
\begin{align*}
\int U(\vec x|P) \mathbf 1 \{x_2 \in \overline B_b\} \, \mathrm d \vec x
&= \int \mathbf 1\{x_2 \in \overline B_b\} d(x_2) U(x_2, x_3)\dotsm U(x_{2\ell},x_{2\ell+1}) \, \mathrm d\vec x
\\
&\le \biggl(\int_{\overline B_{b}} d(x_1)^2 \, \mathrm d x_1\biggr)^{1/2} \E [U^2]^{(2\ell-1)/2}
\\
&\lesssim (p^\Delta b)^{1/2}  p^{\Delta (2\ell-1)/2}
\\
&= \sqrt{b} p^{\Delta \ell} = o(p^{\Delta \ell})\,.
\end{align*}

Now let us prove claim for $k \ge 2$ assuming that
\begin{equation}
	 \label{eq:path-neg-induction}
\int U(\vec x|P) \mathbf 1 \{x_{2k-2} \in \overline B_{b'}\} = o(p^{\Delta \ell})
\end{equation}
holds for any $b'=o(1)$, and in particular, for $b' = b^{1/3}$. By removing vertex $2k-2$ from $P$ and then applying generalized H\"older's inequality followed by Lemma~\ref{lem:apriori}, we have, for any $p \ll b',b \ll 1$,
\begin{align*}
&\hspace{-2em}\int U(\vec x | P) \mathbf 1\{x_{2(k-1)} \in B_{b'}, x_{2k} \in \overline B_b\} \, \mathrm d\vec x 
\\
&\le \lambda(B_{b'}) \int U(x_1,x_2) \dotsm U(x_{2k-4},x_{2k-3}) \cdot \mathrm 1\{x_{2k} \in \overline B_b\} U(x_{2k-1},x_{2k}) U(x_{2k},x_{2k+1}) \dotsm U(x_{2\ell},x_{2\ell+1}) \, \mathrm d \vec x 
\\
&\le \lambda(B_{b'}) \int U(x_1,x_2) \dotsm U(x_{2k-4},x_{2k-3}) \cdot \mathrm 1\{x_{2k} \in \overline B_b\} d(x_{2k}) U(x_{2k},x_{2k+1}) \dotsm U(x_{2\ell},x_{2\ell+1}) \, \mathrm d \vec x 
\\
&\le \lambda(B_{b'}) \biggl(\int_{\overline B_b} d(x_{2k})^2 \, \mathrm d x_{2k}\biggr)^{1/2} \E [U^2]^{(2\ell-3)/2}
\\
&\lesssim \frac{p^\Delta}{b'} (p^\Delta b)^{1/2} p^{\Delta (2\ell-3)/2}
= \frac{\sqrt{b}}{b'} p^{\Delta \ell}\,,
\end{align*}
which is $o(p^{\Delta \ell})$ if $b' = b^{1/3}$. Combining with \eqref{eq:path-neg-induction}, we obtain \eqref{eq:path-neg}. This completes the induction step.
\end{proof}

We have completed the proof of our main result, Theorem~\ref{thm:main-graphon}, for irregular graphs $H$.

\section{Bounding contributions from regular graphs}
\label{sec:regular}

\subsection{Proof of Proposition~\ref{ppn:main-term}(b)} Here $H$ is a connected $\Delta$-regular non-bipartite graph. Fix $v \in V(H)$.
Let $H-v$ denote $H$ after deleting $v$ (and all edges incident to $v$). We see that $H-v$ is not $\Delta$-regular and satisfies $\tau(H-v) > |E(H-v)|/\Delta$ (or else putting $v$ back in would give $\tau(H) = |E(H)|/\Delta$, which is impossible as $H$ is non-bipartite). Thus by Lemma~\ref{lem:F-negligible}, there is some constant $\kappa > 0$ such that $t(H-v, U) \lesssim p^{|E(H-v)| + \kappa} = p^{|E(H)| - \Delta + \kappa}$ (here we apply Lemma~\ref{lem:F-negligible} to some connected component $H'$ of $H-v$ satisfying $\tau(H')>|E(H')|/\Delta$ and use \eqref{eq:F-density-order} to bound the other components; note that $H$ has no $\Delta$-regular subgraphs other than itself due to its connectedness). Thus
\[
	\int U(\vec x|H) \mathbf 1 \{x_v \in B_b\}\, \mathrm d \vec x 
	\le \lambda(B_b) t(H-v, U)
	\lesssim \frac{p^\Delta}{b} p^{|E(H)| -\Delta + \kappa} = o( p^{|E(H)|})\,,
\]
provided that $b \gg p^{\kappa}$. Proposition~\ref{ppn:main-term}(b) then follows after considering all $v \in V(H)$.

\subsection{Proof of Proposition~\ref{ppn:main-term}(c)} Here $H$ is a connected $\Delta$-regular bipartite graph. Proposition~\ref{ppn:main-term}(c) is an immediate consequence of Lemma~\ref{lem:adj-neg} and the following lemma.

\begin{lem} \label{lem:reg-neg-2neighbor}
Let $\Delta \ge 2$ and $U$ be a graphon satisfying \eqref{eq:apriori}.
Let $H$ be a $\Delta$-regular bipartite graph. For any $b_0 = o(1)$, there exists some $b$ with $b_0 \le b \ll 1$ such that
\[
\int U(\vec x | H) \mathbf 1 \{\exists v \in V(H), u, w \in N_H(v) \colon x_u \in B_b, x_w \in \overline B_b\} \, \mathrm d \vec x = o(p^{|E(H)|})\,,
\]
where $N_H(v)$ is the neighborhood of the vertex $v$ in $H$.
\end{lem}

To deduce Proposition~\ref{ppn:main-term}(c) for a connected graph $H$, note that Lemma~\ref{lem:reg-neg-2neighbor} implies that to estimate $t(H, U)$ up to $o(p^{|E(H)|})$, one only needs to consider embeddings of $H$ where the vertices on the same side of the bipartition of $H$ get mapped to the same choice of $B_b$ versus $\overline B_b$. Furthermore, the case of both sides getting mapped to $B_b$ is eliminated by Lemma~\ref{lem:adj-neg}. The possibilities of $B_b$ versus $\overline B_b$ are captured by $\Gamma_1$, $\Gamma_2$, and $\Gamma_3$ in Proposition~\ref{ppn:main-term}(c).

\medskip

Now we prove Lemma~\ref{lem:reg-neg-2neighbor}. By K\"onig's edge-coloring theorem, $H$ has a proper edge-coloring with exactly $\Delta$ colors, where every color class is a perfect matching. For each path $u,v,w$, by taking the union of the two color classes that contain edges $(u,v)$ and $(v,w)$, we obtain a 2-matching containing the path $u,v,w$, which must be a disjoint union of cycles. Note that any cycle $C_\ell$ in $H$ satisfies \begin{equation} \label{eq:cycle-holder}
t(C_\ell, U) \le \E [U^2]^{\ell/2} \lesssim p^{\ell \Delta/2}
\end{equation}
by generalized H\"older's inequality and \eqref{eq:EU2-upper}.
As in Lemma~\eqref{lem:irreg-neg-A-low}, by isolating the cycle in the 2-matching that contains the vertices $u,v,w$, Lemma~\ref{lem:reg-neg-2neighbor} follows from the following claim. Its proof is an extension of the 4-cycle case in \S\ref{sec:4cycle}.

\begin{lem} \label{lem:cycle-neg}
	Let $\Delta \ge 2$ and $L$ be positive integers, and $U$ a graphon satisfying~\eqref{eq:apriori}. For any $b_0 = o(1)$, there exists some $b$ with $b_0 \le b \ll 1$ such that
	\[
	\int U(\vec x | C_\ell) \mathbf 1\{x_1 \in B_b, x_3 \in \overline B_b\} \, \mathrm d\vec x = o(p^{\Delta\ell/2})
	\]
	uniformly for all $3 \le \ell \le L$, where the vertices of the cycle $C_\ell$ are labeled $1, 2, \dots, \ell$ in cyclic order.
\end{lem}

\begin{proof}
Fix $\varepsilon \ge 0$ (which can be made arbitrarily small). It suffices to show that one can find $b$ (depending on $\varepsilon$, $L$ and $U$) with $b_0 \le b \ll 1$ such that
\begin{equation}
	\label{eq:cycle-neg-eps}
	\int U(\vec x | C_\ell) \mathbf 1\{x_1 \in B_b, x_3 \in \overline B_b\} \, \mathrm d\vec x \le (1+o(1))\varepsilon p^{\Delta\ell/2} 
\end{equation}
uniformly for all $3 \le \ell \le L$.

By removing the vertex labeled 1 and then applying generalized H\"older's inequality followed by Lemma~\ref{lem:apriori}, we have, for any $p \ll b',b'' \ll 1$,
\begin{align}
&\hspace{-2em}\int U(\vec x | C_\ell) \mathbf 1\{x_1 \in B_{b'}, x_3 \in \overline B_{b''}\} \, \mathrm d\vec x 
\nonumber
\\
&\le \lambda(B_{b'}) \int  \mathbf 1\{x_3 \in \overline B_{b''}\}  U(x_2,x_3)U(x_3,x_4)\dotsm U(x_{\ell-1},x_\ell) \, \mathrm d \vec x 
\nonumber
\\
&\le \lambda(B_{b'}) \int \mathbf 1\{x_3 \in \overline B_{b''}\} d(x_3)U(x_3,x_4)\dotsm U(x_{\ell-1},x_\ell) \, \mathrm d \vec x 
\nonumber
\\
&\le \lambda(B_{b'}) \biggl(\int_{\overline B_{b''}} d(x_3)^2 \, \mathrm d x_3\biggr)^{1/2} \E [U^2]^{(\ell-3)/2}
\nonumber
\\
&\lesssim \frac{p^\Delta}{b'} (p^\Delta b'')^{1/2} p^{\Delta (\ell-3)/2}
\lesssim \frac{\sqrt{b''}}{b'} p^{\Delta \ell/2}\,.
\label{eq:cycle2b}
\end{align}

Fix some $\ell$ for now. By \eqref{eq:cycle-holder}, there is some constant $C$ such that $t(C_\ell, U) \le C p^{\ell\Delta/2}$. Let $M:= \lceil C/\varepsilon \rceil$. Since it never hurts to make $b_0$ larger, assume that $p \ll b_0 \ll 1$. For any sequence $b_0 < b_1 < \cdots < b_M = o(1)$ with $b_{i-1} \le b_i^3$ for each $i \le M$, there is some $1 \le i \le M$ such that
\[
\int U(\vec x | C_\ell) \mathbf 1 \{x_1 \in B_{b_i}, x_3 \in B_{b_{i-1}} \setminus B_{b_i}\} \, \mathrm d \vec x
\le \varepsilon p^{\ell\Delta /2}\,
\]
since otherwise the sum of these integrals over $1 \le i \le M$ (note that the sets $B_{b_{i-1}}\setminus B_{b_i}$, $1 \le i \le M$, are disjoint) would violate $t(C_\ell, U) \le C p^{\ell\Delta/2}$. Combining the above estimate with \eqref{eq:cycle2b} applied with $b' = b_i$ and $b'' = b_{i-1}$ (so that $\sqrt{b''}/b' = o(1)$), we see that \eqref{eq:cycle-neg-eps} holds with $b = b_i$ (for this specific $\ell$).

To deduce \eqref{eq:cycle-neg-eps} for all $3 \le \ell \le L$, we start with the sequence $b_0 < b_1 < \dots < b_{M^L}$ satisfying $b_{i-1}\le b_i^3$ for each $i$ (e.g., take $b_i=b_0^{3^{-i}}$). Iteratively, for each $3 \le \ell \le L$, use the above argument to take a subsequence keeping at least $1/M$ fraction of the terms so that $b=b_i$ satisfies \eqref{eq:cycle-neg-eps} for this $\ell$ for every $b_i$ in the remaining subsequence. At the end of the process, we find some $b = b_i$ that satisfies \eqref{eq:cycle-neg-eps} for all $3 \le \ell \le L$.
\end{proof}

\begin{remark}
As mentioned below Theorem~\ref{thm:discrete-var}, when $H$ is regular and $n^{-2/\Delta}\ll p \ll n^{-1/\Delta}$ (a range in which the anti-clique construction is no longer applicable), it follows from our arguments that
\begin{align*}
\lim_{n \to \infty} \frac{\phi(H,n,p,\delta)}{n^{2}p^{\Delta}\log(1/p)} =\tfrac12\delta^{2/|V(H)|}\,.
\end{align*}
Indeed, the upper bound follows from the clique construction in Proposition~\ref{ppn:construction}(a), and it remains to verify the lower bound. Since $np^{\Delta} = o(1)$, in the above arguments, we can choose our threshold $b$ such that $np^{\Delta} \ll b \ll 1$, and thus $\lambda(B_b) \lesssim p^\Delta / b \ll n^{-1}$ by~\eqref{eq:B_b-upper}. Since $n\lambda(B_b)$ must be an integer in the discrete setting, we must have $\lambda(B_b) =0$, which rules out the anti-clique construction, thereby proving the claim.
\end{remark}

\section{Disconnected graphs}\label{sec:disconnected}

The arguments at the end of \eqref{sec:main-terms} can be easily modified to handle disconnected graphs by considering the different connected components. The solution of the variational problem can be expressed as a two variable constrained optimization problem.

\begin{thm} \label{thm:disconnected}
Let $H$ be a graph with maximum degree $\Delta \ge 2$. Let  $H_1, H_2, \ldots, H_s$ be the connected components of $H$. For fixed $\delta > 0$, we have
\[
\lim_{p\rightarrow 0} \frac{\phi(H, p, \delta)}{p^\Delta\log(1/p)}= 
\inf_{\theta, \eta\geq 0}\biggl\{\theta+\tfrac12 \eta : \prod_{i=1}^s\left(P_{H_i^*}(\theta)+{\bf 1}\{H_i\text{ is $\Delta$-regular}\} \eta^{|E(H_i)|/\Delta}\right)= 1+\delta \biggr\}\,.
\]
\end{thm}

For disconnected graphs the solution of the variational problem might not be attained by the clique or the anti-clique graphons, but by a mixture of these two.

\begin{example}Let $H$ be the disjoint union of a triangle ($K_3$) and a 2-star ($K_{1, 2}$). By Theorem~\ref{thm:disconnected},
\[
\lim_{p\rightarrow 0} \frac{\phi(H, p, \delta)}{p^2\log(1/p)}=\inf_{\theta, \eta\geq 0}\left\{\theta+\tfrac12 \eta : (1+3\theta+\eta^{3/2})(1+\theta)= 1+\delta \right\}\,.
\]
The pure clique construction corresponds to setting $\theta = 0$, so that $\eta = \delta^{2/3}$ and $\theta + \tfrac12 \eta = \tfrac12 \delta^{2/3}$. The pure anti-clique construction corresponds to setting $\eta = 0$, so that $\theta + \tfrac12 \eta = \theta \sim \tfrac{1}{\sqrt{3}} \delta^{1/2}$ for large $\delta$.
For large $\delta$, the optimal solution is asymptotically given by a mixture with $\theta \sim 3^{-3/5} \delta^{2/5}$ and $\eta \sim 3^{2/5} \delta^{2/5}$, giving $\theta + \tfrac12 \eta \sim \frac{5}{2 \cdot 3^{3/5}} \delta^{2/5}$.
\end{example}

\section*{Acknowledgment} This work was initiated when the first author was an intern at the Theory Group of Microsoft Research, Redmond. E.L.\ was supported in part by NSF grant DMS-1513403 and Y.Z.\ was supported by a Microsoft Research Ph.D.\ Fellowship. We thank the anonymous referee for helpful comments that greatly improved the exposition of the paper.

\bibliographystyle{amsplain_mod2}
\bibliography{ldp_ref}

\end{document}